\theoremstyle{plain}
\newtheorem{thm}{Theorem}
\newtheorem{cor}{Corollary}
\newtheorem{lem}{Lemma}
\newtheorem{prop}{Proposition}
\theoremstyle{definition}
\newtheorem{defn}{Definition}
\newtheorem{rem}[thm]{Remark}
\newtheorem{ques}[thm]{Question}
\title{Decidable Fragments of Theories\\ in \\Field Arithmetic }
\date{\today\\
AMS 2010 Subject Classification: Primary 03B25. Secondary: 03C60, 11U05, 12L05}
\keywords{Decidable Fragments;Hilbertian field; PAC field}
\author{Chun-YU Lin}
\email{maxcylin@iis.sinica.edu.tw}
\address{Institute of Information Science, Academia Sinica, Taiwan.}
\thanks{This paper is the author's master thesis. He would like to express his gratitude for Professor Shih-Ping Tung's guidance.}
\begin{document}

\begin{abstract}
 In this paper, we show that the $\exists^1 \forall^1$ theories of Hilbertian fields with charateristic 0 and perfect Hilbertian fields are both decidable. We also prove that the $\forall^1 \exists^1$ theories of Hilbertian fields with charateristic 0, Hilbertian fields, PAC fields with characteristic 0, and PAC fields are all decidable.
\end{abstract}
\maketitle

\section{Introduction}
In mathematics, we often face problems take the form: find an effective procedure by means of which it can be determined in finite steps for each element of our interested set, whether or not the element satisfied the defining property. The solution of such problem usually consists of exhibiting  algorithmic-like arguments or proofs to demonstrate that procedure. The problems of this kind are called decision problems or decidability of theories which depends on the set we considered. 

Most of the decision problems in mathematics are solvable\cite{MOR}. However, there are still some decision problems that are unsolvable. For example, the word problem is unsolvable~\cite{BCL}. Hilbert's tenth problem over $\mathbb{Z}$ and $\mathbb{N}$ are both unsolvable~\cite{DMR}. For the first order theories, the cases are different. In 1931, K. G$\ddot{o}$del announced his famous incompleteness theorem which implies that the elementary theory of $\langle \mathbb{N},+,\cdot,0,1 \rangle$ and $\langle \mathbb{Z},+.\cdot,0,1 \rangle$ are undecidable~\cite{KG}. On the other hand, C.~H.~Langford proved in 1927 that the elementary theory of  $\langle \mathbb{N},\leq \rangle$ is decidable~\cite{CHL}. There are many elementary theories of various mathematical structure have been proved to be decidable or undecidable since then. We list some theories of fields which have been proved to be undecidable and refer to \cite{ELT} and \cite[Chapter 13 and 16]{JM} for exhaustive lists of decidable and undecidable theories.
\begin{thm}
\label{mthm4}
The following elementary theories in the language $\mathcal{L_{ring}}$ of ring are all undecidable.
\begin{enumerate}
\item The elementary theory of fields, \cite{JR},
\item The elementary theory of fields of characteristic 0 \cite{JR},
\item The elementary theory of algebraic number fields~\cite{JR1},
\item The elementary theory of Hilbertian fields~\cite{FJ}, 
\item The elementary theory of PAC fields~\cite{CDM}.
\end{enumerate}
\end{thm}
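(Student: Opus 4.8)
The plan is to establish each clause as a \emph{recursive reduction} from a theory already known to be undecidable — the first-order arithmetic of $\langle\mathbb{Z},+,\cdot\rangle$ (Gödel~\cite{KG}), or equivalently Hilbert's tenth problem over $\mathbb{Z}$~\cite{DMR} — by first-order \emph{interpreting} such a structure inside the relevant class of fields. For clauses (1) and (2) the engine is Julia Robinson's definability theorem~\cite{JR}, which I would use in the following shape: there are $\mathcal{L}_{\mathrm{ring}}$-formulas $\theta(\bar y)$ and $\varphi(x,\bar y)$, together with formulas defining a binary addition and multiplication on $\varphi$, such that (i) in \emph{every} field $F$ and for every tuple $\bar a$ with $F\models\theta(\bar a)$ the structure carried by $\varphi(F,\bar a)$ is isomorphic to $\langle\mathbb{Z},+,\cdot\rangle$, and (ii) $\mathbb{Q}\models\theta(\bar a_0)$ for some $\bar a_0$. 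Granting this, for an arithmetical sentence $\sigma$ set $\rho_\sigma:=\forall\bar y\,(\theta(\bar y)\to\sigma^{\varphi})$, where $\sigma^{\varphi}$ is the relativization of $\sigma$ to the interpreted structure. Then $\mathbb{Z}\models\sigma$ holds if and only if $\rho_\sigma$ is true in every field — when $\theta(\bar a)$ holds the interpreted copy of $\mathbb{Z}$ decides $\sigma$, and otherwise the implication is vacuous — while the reverse reduction is furnished by $\mathbb{Q}$ through the witnesses $\bar a_0$; since $\sigma\mapsto\rho_\sigma$ is recursive and $\mathrm{Th}(\mathbb{Z})$ is undecidable, the theory of fields is undecidable, and the identical $\rho_\sigma$ does clause (2) once the recursive scheme $\underbrace{1+\cdots+1}_{p}\neq 0$ (one sentence per prime) is adjoined to the axioms. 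Within this block the step I expect to cost the most is (i): the correctness of Robinson's interpretation rests on the arithmetic of quadratic forms over $\mathbb{Q}$ (Hasse--Minkowski), and it has to be arranged so that the mere truth of $\theta$ forces the interpretation to be faithful in an \emph{arbitrary} field, not just in $\mathbb{Q}$.

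Clause (4) I would obtain by the very same device, the only additional ingredient being that $\mathbb{Q}$ is Hilbertian (Hilbert's irreducibility theorem): the forward direction holds in the smaller class of Hilbertian fields because it already holds in all fields, and the backward direction is again witnessed by $\mathbb{Q}$, so $\sigma\mapsto\rho_\sigma$ reduces $\mathrm{Th}(\mathbb{Z})$ to the theory of Hilbertian fields~\cite{FJ}. For clause (3) I would instead invoke Robinson's companion definability result~\cite{JR1}: the ring of integers $\mathcal{O}_K$ of an algebraic number field $K$ is first-order definable in $K$ (again via quadratic forms), which interprets $\langle\mathbb{Z},+,\cdot\rangle$ in each such $K$ and makes $\mathrm{Th}(K)$ undecidable; if the clause is read as a statement about the whole class of number fields, the vacuous-elsewhere trick of the previous paragraph applies once more with $\mathbb{Q}$ as anchor.

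Clause (5) is the one I expect to be the real obstacle, since $\mathbb{Q}$ is not PAC and a PAC field in general admits no first-order definition of $\mathbb{Z}$ (its definable sets are far too tame), so the interpretation must be located elsewhere. Following Cherlin--van den Dries--Macintyre~\cite{CDM}, the plan is to exploit that the elementary theory of a PAC field is coordinatized by the theory of its algebraic part together with the theory of its absolute Galois group as a profinite group, while conversely assertions about the finite quotients of that Galois group and about the splitting behaviour of polynomials are first-order over the field. Since arbitrary countable projective profinite groups — a family already rich enough to carry undecidable information about finite groups and their embedding problems — occur as absolute Galois groups of PAC fields, and such fields exist in characteristic $0$ as well as in positive characteristic, one transfers the undecidability across and covers the characteristic-$0$ and the general PAC clause simultaneously. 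The delicate point, and what I view as the main obstacle for the theorem as a whole, will be to make this Galois-theoretic coordinatization quantitative enough to yield an honest recursive reduction that is uniform in the characteristic.
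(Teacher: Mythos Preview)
The paper does not prove Theorem~\ref{mthm4} at all: it is stated in the Introduction purely as background, with each clause carrying a citation to the original source (\cite{JR}, \cite{JR1}, \cite{FJ}, \cite{CDM}) and no argument supplied. There is therefore no ``paper's own proof'' to compare your proposal against; you have in effect written a proof sketch for a result the paper merely quotes.

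That said, a brief comment on your sketch. Your overall strategy for (1)--(4) is sound, but the formulation of condition~(i) --- that whenever $\theta(\bar a)$ holds in an \emph{arbitrary} field the interpreted structure is literally $\langle\mathbb{Z},+,\cdot\rangle$ --- is stronger than what Robinson's formulas actually deliver, and you correctly flag this as the expensive step. The cleaner route, which avoids this difficulty entirely, is the Tarski--Mostowski--Robinson machinery: Robinson's arithmetic $Q$ is finitely axiomatizable and essentially undecidable, and Robinson's definability of $\mathbb{Z}$ in $\mathbb{Q}$ interprets $Q$ in $\mathrm{Th}(\mathbb{Q})$, making $\mathrm{Th}(\mathbb{Q})$ \emph{hereditarily} undecidable; since the theory of fields, the theory of fields of characteristic~$0$, the theory of algebraic number fields, and the theory of Hilbertian fields are all subtheories of $\mathrm{Th}(\mathbb{Q})$ (the last because $\mathbb{Q}$ is Hilbertian), each is undecidable with no need for your uniform-$\theta$ device. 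This is in fact how the paper itself alludes to the matter later, remarking (citing \cite[pp.~304--305]{JR2}) that ``the elementary theory of any class of fields which contains $\mathbb{Q}$ is undecidable.'' Your outline for (5) via \cite{CDM} is accurate in spirit; the coordinatization by absolute Galois group and the realizability of sufficiently wild projective profinite groups is indeed the mechanism, though as you note the honest recursive reduction requires the full apparatus of that paper.
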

Even if we consider the same domain, different structures may have different decidability  of the elementary theories. The results proposed by G$\ddot{o}$del and Langford mentioned above are good examples. But for those structures whose elementary theories are undecidable, we may ask the following problem:
\begin{ques}
\label{que1}
What subsets of undecidable theories are decidable or undecidable ? We try to find the decidable fragments(i.e. dividing line) of decidability of different theories.
\end{ques}
For the question~\ref{que1}, there are two different approaches: different numbers of quantifier, different kinds of quantifier ( $\forall$ or $\exists$ ). To discuss these two approaches, we need some definition of terminology.

\begin{defn}
Let Q deote the quantifier $\forall  \mbox{ or } \exists $. We call $\varphi$ a $Q_1^m Q_2^n$ sentence for $m,n \in \mathbb{N}$ if and only if $\varphi$ is logically equivalent to a sentence of the form $Q_1x_1 \cdots Q_1x_m Q_2y_1\cdots Q_2y_n \varphi'(x_1, \ldots,x_m,y_1,\ldots,y_n) $ where $\varphi' $ is a quantifier-free formula. We call $\psi$ a  $Q_1^m Q_2^n$ equation if and only if $\psi$ is of the form $$Q_1x_1 \cdots Q_1x_m Q_2y_1\cdots Q_2y_n f(x_1, \ldots,x_m,y_1,\ldots,y_n)=0 $$ where f is a polynomial.
\end{defn}
For example, Hilbert's Tenth Problem can be formulated in the form of question as: Decide whether or not the set of $\exists ^n$ equations for all $n \geq 0$ over $\mathbb{N}$ and $\mathbb{Z}$ are decidable. For sets of $Q_1^m Q_2^n$ sentences, we formulate following definition.
\begin{defn}
Let Q denote the quantifier $\forall  \mbox{ or } \exists $. We call a subset of elementary theory $Th(K)$ (resp. $Th(\mathbf{K})$) of an mathematical structure K (resp. a class of mathematical structure $\mathbf{K}$) a $Q_1^mQ_2^n$ theory if it consists of $Q_1^mQ_2^n$ sentences which is true in K ( resp. true in all mathematical structures in $\mathbf{K}$).
\end{defn}
Of course, we can extend the definition for $Q_1^m Q_2^n$ equation and theory to $Q_1^{m_1} \cdots Q_n{m_n}$ for $m_1,\ldots,m_n \in \mathbb{N}$ and $Q_1,\ldots,Q_n \in \{ \forall ,\exists  \}$ like arithmetical hierarchy in recursion theory. But the there are few results in three or more alternative quantifiers of $Q_1^{m_1} \cdots Q_n{m_n}$ theory over other algebraic structures than $\mathbb{N}$ and $\mathbb{Z}$. So we mainly consider $Q_1^m Q_2^n$ equation and theory. Notice that if we know that a $Q_1^m Q_2^n$ theory of some mathematical structures (or a class of mathematical structures) is decidable, then so is the set of $Q_1^m Q_2^n$ equations. But if a $Q_1^m Q_2^n$ theory of some mathematical structures (or a class of mathematical structures) is undecidable, it may still happens that the set of $Q_1^m Q_2^n$ equations is decidable. Since Hilbert's tenth problem over $\mathbb{N}$ and $\mathbb{Z}$ are unsolvable, we may ask what is the least n such that the set of $\exists^n$ equations for all $n \geq 0$ over $\mathbb{N}$ and $\mathbb{Z}$ are $\mathbf{undecidable}$ ? This is the first approach of our question~\ref{que1}.
\begin{thm}
\begin{enumerate}
\item The set of  $\exists^n$ equations over $\mathbb{N}$ is undecidable for all $n \geq  \bf9$~\cite{JJ1}.
\item The set of  $\exists^n$ equations over $\mathbb{Z}$ is undecidable for all $n \geq \bf11$~\cite{SZW}.
\end{enumerate}
\end{thm}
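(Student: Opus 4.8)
The plan is to derive both statements from the Matiyasevich--Robinson--Davis--Putnam theorem (every recursively enumerable set is Diophantine; see \cite{DMR}) together with the existence of a non-recursive r.e.\ set, the only real work being to push the number of unknowns down to the stated constants. So the first step is to recall that for any r.e.\ set $S\subseteq\mathbb{N}$ there is a polynomial $P\in\mathbb{Z}[a,x_1,\ldots,x_k]$ with $a\in S\iff \exists x_1,\ldots,x_k\in\mathbb{N}\ P(a,\bar x)=0$; fixing a non-recursive such $S$ and treating $a$ as a parameter gives a computable family of $\exists^k$ equations over $\mathbb{N}$ whose truth set is non-recursive, so any algorithm deciding the set of true $\exists^k$ equations would decide membership in $S$. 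Hence it suffices to show that $k$ can always be taken to be $9$ (over $\mathbb{N}$) and $11$ (over $\mathbb{Z}$); padding with dummy unknowns then yields all larger $n$.

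For part (1), the reduction to nine unknowns is the content of \cite{JJ1}, and I would follow that route. Over $\mathbb{N}$ (as over $\mathbb{Z}$) a finite system $P_1=0,\ldots,P_m=0$ collapses to the single equation $P_1^2+\cdots+P_m^2=0$ at no cost in unknowns, so the real issue is eliminating auxiliary variables. The technical core is twofold: first, the exponential relation $c=a^b$ admits a Diophantine definition in a small fixed number of unknowns (via solutions of Pell-type equations), which is what makes a bounded-variable universal equation conceivable at all; second, Matiyasevich's relation-combining theorem lets several Diophantine conditions, each of which naively carries its own block of auxiliary unknowns, be expressed using a single shared block, while a G\"odel/pairing coding of finite sequences absorbs the remaining bookkeeping unknowns. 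A careful accounting of all these devices yields a universal Diophantine equation --- hence a Diophantine definition of every r.e.\ set --- in $9$ unknowns over $\mathbb{N}$; combined with the first step this proves (1).

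For part (2) one cannot simply substitute the Lagrange four-square representation $a=x_1^2+x_2^2+x_3^2+x_4^2$ into the nine-unknown representation, since that would roughly quadruple the count. Instead I would invoke \cite{SZW}, where the reduction is carried out directly over $\mathbb{Z}$: one works with Diophantine definitions in which sign conditions ``$x\ge 0$'' are themselves rendered economically, and the combining and coding steps are re-optimized in this setting, arriving at $11$ unknowns. The undecidability conclusion then follows exactly as in the first step.

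The main obstacle, in both parts, is precisely this variable economy: MRDP by itself gives no control on $k$, and squeezing it down to $9$ and $11$ requires the delicate number-theoretic machinery of \cite{JJ1} and \cite{SZW} --- Pell equations for exponentiation, the Matiyasevich--Kosovskii identities for merging relations, sequence coding --- together with a meticulous count. I would not attempt to reconstruct these optimal bounds here but would cite the two papers; everything else, namely the passage from a non-recursive r.e.\ set to an undecidable parametrized family of bounded-variable equations and the padding up to larger $n$, is routine.
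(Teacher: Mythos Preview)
The paper does not prove this theorem at all: it is stated in the Introduction purely as a citation of known results, with \cite{JJ1} and \cite{SZW} attached to the two items and no accompanying proof or sketch. Your proposal is a correct high-level outline of what those two papers actually do (MRDP plus a non-recursive r.e.\ set for the undecidability, Pell-equation exponentiation and relation-combining for the variable reduction, then padding), so there is no discrepancy to flag; you have simply supplied more than the paper does. If anything, you could shorten your write-up to a single sentence deferring to \cite{JJ1} and \cite{SZW}, since that is exactly the level of detail the paper itself commits to.
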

Note that the set of $\exists$ equations over $\mathbb{N}$ and $\mathbb{Z}$ are decidable as~\cite{MR} indicates. Since the decidability of $\exists^2$ equations over $\mathbb{N}$ and $\mathbb{Z}$ are still unknown~\cite{DMR}, we still have no answer for $2 \leq n \leq 8$ in the case of $\mathbb{N}$ and for $2 \leq n \leq 10$ in the case of $\mathbb{Z}$. For the second approach, we can separate it into global and local directions.
\begin{defn}
The global and local approach of different quantifier of decidable fragments are defined as following:
\begin{enumerate}
\item The global direction consider the $Q_1^m Q_2^n$ sentences with m or n to be all natural numbers.
\item The local direction consider the $Q_1^m Q_2^n$ sentences with fixed natural number m,n.
\end{enumerate}
\end{defn}
Note that the $\Pi^0_1 $, $\Sigma^0_1$, $\Pi^0_2$, and $\Sigma^0_2$ sentences in recursion theory are all special cases of above definition.
\begin{thm}
We have following results about decidability of the set of $Q_1^m Q_2^n$ equations and in global direction.
\begin{enumerate}
\item The set of $\exists ^n$ equations for all $n \in \mathbb{N}$ over $\mathbb{N}$ and $\mathbb{Z}$ are both undecidable~\cite{DMR}. (For the decidability $\exists ^n$ equations with all $n \in \mathbb{N}$ over other commutative rings, see~\cite{BP1}), 

\item The set of $\forall^n \exists$ equations for all $n\geq 0$ over $\mathbb{Z}$ is decidable but the set of $\forall^n \exists$ equations for all $n \geq 0 $ over $\mathbb{N}$ is undecidable. Also, the set of $\forall^n \exists^2$ equations for all $n\geq 0 $ over $\mathbb{Z}$ is undecidable~\cite{Tung1}.
\end{enumerate}
\end{thm}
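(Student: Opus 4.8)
All four assertions are attributed to prior work, so what I would provide is a chain of reductions together with the one place where I would invoke an external input. \emph{For part (1)}, the statement over $\mathbb{N}$ is only a reformulation of the negative solution of Hilbert's tenth problem quoted above: a true $\exists^n$-equation in $\mathbb{N}$ is exactly a solvable Diophantine equation in $n$ unknowns, so an algorithm deciding the union over all $n$ of such equations would solve Hilbert's tenth problem over $\mathbb{N}$. To pass to $\mathbb{Z}$ I would reduce from the $\mathbb{N}$ case via Lagrange's four-square theorem: the substitution $x_i\mapsto a_i^2+b_i^2+c_i^2+d_i^2$ converts an $\exists^k$-equation over $\mathbb{N}$ into an $\exists^{4k}$-equation over $\mathbb{Z}$ with the same truth value, and is computable, so decidability of the global set over $\mathbb{Z}$ would transfer to $\mathbb{N}$. (The sharp bounds $n\ge 9$ and $n\ge 11$ quoted earlier come from far more economical variable eliminations; for the present ``all $n$'' statement the crude substitution is enough.)

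\emph{For the decidable half of part (2)}, a $\forall^n\exists$-equation over $\mathbb{Z}$ is $\forall x_1\cdots\forall x_n\,\exists y\,f(\bar x,y)=0$, asserting that the one-variable polynomial $f(\bar a,\,\cdot\,)\in\mathbb{Z}[y]$ has an integer zero for \emph{every} $\bar a\in\mathbb{Z}^n$. My plan would be: write $f=\sum_{i=0}^{d}c_i(\bar x)\,y^i$; by the rational-root theorem any integer zero of $f(\bar a,\,\cdot\,)$ divides $c_0(\bar a)$, so once $\|\bar a\|$ is large the candidate zeros lie on finitely many algebraic ``branches'' $y=\varphi(\bar a)$ obtained from a Newton--Puiseux factorization of $f$ as a polynomial in $y$ over the algebraic closure of $\mathbb{Q}(\bar x)$; for each branch, the condition that $\varphi(\bar a)$ be an integer is, outside an explicitly bounded region, a finite disjunction of congruence conditions on $\bar a$; and then the truth of the whole sentence reduces to checking the finitely many $\bar a$ inside the bounded region and checking that the branch-plus-congruence conditions cover all $\bar a$ outside it, both effective. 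The step I expect to be genuinely hard — and the reason this is a theorem rather than an exercise — is controlling, \emph{uniformly in $\bar a$}, exactly when an algebraic-function value is an integer; for this technical core I would cite \cite{Tung1}.

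\emph{For the undecidable halves of part (2)}, I would start with $\mathbb{N}$, where the reduction is short: for any polynomial $P(\bar z)$ the sentence $\forall z_1\cdots\forall z_k\,\exists y\,\bigl(P(\bar z)^2-y-1=0\bigr)$ holds in $\mathbb{N}$ iff $P(\bar z)^2\ge 1$ for all $\bar z\in\mathbb{N}^k$, i.e.\ iff $P(\bar z)=0$ has no solution in $\mathbb{N}^k$; since the latter is undecidable (it is the complement of Hilbert's tenth problem over $\mathbb{N}$) and the sentence depends computably on $P$, the global set of true $\forall^n\exists$-equations over $\mathbb{N}$ is undecidable. This argument dissolves over $\mathbb{Z}$, where $\exists y\,(t=y+1)$ is always satisfiable, so one inner existential cannot detect $t\ne 0$ — in harmony with the decidability above. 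Undecidability over $\mathbb{Z}$ therefore reappears only after one more inner quantifier, and I would invoke \cite{Tung1} for the fact that two suffice: an undecidable Diophantine problem can be encoded as a $\forall^n\exists^2$-equation over $\mathbb{Z}$, the two inner variables supplying the nonvanishing/positivity information that a single inner variable provably cannot. Pinning down the minimal such encoding is the delicate point there, which I would not attempt to reproduce.
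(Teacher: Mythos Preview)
The paper does not prove this theorem at all: it is stated in the introduction purely as a survey of prior results, with every clause attributed to a reference and no proof environment following it. So there is no ``paper's approach'' to compare against; your proposal already goes well beyond what the paper supplies.

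That said, your sketches are sound. The Lagrange reduction for part~(1) over $\mathbb{Z}$ is the standard one, and your encoding $\forall\bar z\,\exists y\,(P(\bar z)^2-y-1=0)$ for the $\mathbb{N}$ half of part~(2) is clean and correct: over $\mathbb{N}$ the inner existential succeeds iff $P(\bar z)^2\ge 1$, i.e.\ iff $P(\bar z)\neq 0$, so truth of the sentence is exactly unsolvability of $P=0$. For the decidable $\forall^n\exists$ case over $\mathbb{Z}$, your Newton--Puiseux/branch picture captures the right intuition, though Tung's actual argument in \cite{Tung1} is organized somewhat differently (closer to producing a polynomial Skolem function for $y$ off a computable exceptional set); since you explicitly defer the technical core to that citation, this is not a gap. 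Your treatment of $\forall^n\exists^2$ over $\mathbb{Z}$ is likewise an honest deferral to \cite{Tung1}, which is all the paper itself does.
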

If we consider the global direction in $Q_1^m Q_2^n$ theory, the results are slightly different.
\begin{thm}
For all $m,n \in \mathbb{N}$, we have following results:
\begin{enumerate}
\item The $\forall^m \exists^n$ theories of $\mathbb{N}$ and $\mathbb{Z}$ are undecidable, respectively~\cite{Tung1},
\item The $\forall^m \exists^n$ theory of $\mathbb{Q}$ is undecidable~\cite{K},
\item The $\forall^m \exists^n$ theory of a number field are undecidable~\cite{JP},
\item The $\forall^m$ theories of fields and integral domains are both decidable~\cite{HO},
\item The $\exists^m$ theory of PAC fields is decidable~\cite{FJ}.
\end{enumerate}
\end{thm}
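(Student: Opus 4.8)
This theorem collects results from the literature, so the plan is to organize the argument into its negative half, items (1)--(3), and its positive half, items (4)--(5), recalling in each case the mechanism that makes the cited proof work and checking that the quantifier complexity comes out as claimed.

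For the undecidability results the common device is an interpretation of an undecidable problem into a bounded quantifier fragment. For $\mathbb{N}$, item (1), I would begin from the DPRM (Matiyasevich) theorem, which makes every recursively enumerable set existentially definable, and then follow Tung's construction: a certificate/padding trick rewrites membership in a fixed undecidable set as a $\forall^{m}\exists^{n}$ sentence, the universal block checking a witness produced by the existential block, and this shows the $\forall^{m}\exists^{n}$ theory of $\mathbb{N}$ is undecidable for the relevant $m,n$ and, by inserting dummy quantifiers, for all larger ones; the case of $\mathbb{Z}$ then follows by the Lagrange four-squares translation of $\mathbb{N}$ into $\mathbb{Z}$. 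For $\mathbb{Q}$, item (2), and for an arbitrary number field $K$, item (3), the point is that $\mathbb{Z}$ (respectively the ring of integers of $K$) is first-order definable inside the field by a formula of bounded quantifier complexity; invoking the definitions supplied by the cited authors, an undecidable fragment of the arithmetic of $\mathbb{Z}$ or of $\mathcal{O}_{K}$ pulls back to a $\forall^{m}\exists^{n}$ fragment of the field theory, and the only care needed is to track how the alternations in the defining formula add to the prefix length.

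For the decidability results the tools are geometric. For item (4), the $\forall^{m}$ theory of fields (and of integral domains) is decidable because the negation of a universal sentence is an existential sentence asking for a common solution of finitely many polynomial equations and inequations; every field embeds in an algebraically closed field and every integral domain in its fraction field, and universal sentences are inherited by substructures, so the question reduces to solvability over algebraically closed fields, which is decidable in each characteristic and, by transfer, uniform for all but finitely many characteristics, hence decidable overall. For item (5), the $\exists^{m}$ theory of PAC fields is decidable because a PAC field carries a rational point on every nonempty absolutely irreducible variety defined over it; thus an existential sentence holds in all PAC fields exactly when the system it describes is either solvable for a reason valid over the prime field's algebraic closure or carves out a variety with an absolutely irreducible component meeting the prescribed open conditions, and both alternatives are decidable by effective elimination theory together with an effective test for absolute irreducibility, which is the route taken in Fried--Jarden.

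The main obstacle throughout is bookkeeping rather than a single deep idea: on the negative side one must confirm that the interpretations genuinely land in the advertised quantifier class, with no hidden alternation inflating the prefix, and on the positive side one must make the geometric decision procedures effective and uniform in the characteristic. Since each ingredient is already available in the cited references, the proof amounts to assembling them while performing these complexity and uniformity checks.
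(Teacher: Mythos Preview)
The paper does not actually give a proof of this theorem: it appears in the Introduction as a survey of known results, each item carrying only a citation, and the text moves on immediately to the next paragraph. So there is no ``paper's own proof'' to compare against; the authors are simply quoting the literature.

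Your proposal therefore goes well beyond what the paper does. As a high-level summary of the cited mechanisms it is broadly reasonable: the reductions via definability of $\mathbb{Z}$ (Koenigsmann for $\mathbb{Q}$, Park for number fields) and the geometric arguments for items (4) and (5) are the right shapes. A couple of places are slightly off. In item (1) you invoke a ``certificate/padding trick'' of Tung to land in a $\forall^{m}\exists^{n}$ fragment, but once DPRM gives undecidability of the purely existential theory of $\mathbb{N}$, the $\forall^{m}\exists^{n}$ theory is undecidable for free since it contains the existential one; the nontrivial content of \cite{Tung1} concerns sharper prefix classes than what is asserted here. Also note that the theorem's ``for all $m,n$'' cannot be read literally (the $\exists^{1}$ theory of $\mathbb{Z}$ is decidable, for instance); the paper is using the phrase loosely to mean the full $\forall\exists$ fragment with unbounded block lengths, and your write-up should flag that rather than try to establish the claim for every pair $(m,n)$.
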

Now, for the local direction, we often consider the case in $\forall^m \exists^n$ theories so that the decidability of $\forall^m \exists^n$ equations are determined automatically.

\begin{thm}
We have the following results about decidability of $Q_1^m Q_2^n$ theory in global direction.
\begin{enumerate}
\item The set of $\forall^1 \exists^1$ equations over $\mathbb{N}$ and $\mathbb{Z}$ are both decidable~\cite{JJ2},
\item The $\forall^1 \exists^1$ and $\exists^1 \forall^1$ theory of $\mathbb{N},\mathbb{Z},\mathbb{Q}$ and an algebraic number field K are decidable, respectively~\cite{Tung2,Tung3},
\item The $\forall^1 \exists^1$ theory of algebraic number fields, fields of characteristic 0, and fields are decidable, respectively~\cite{Tung3},
\item The $\forall^1 \exists^1$ theory of integral domains is decidable~\cite{Tung4}, 
\item The $\forall^1 \exists^1$ theory of algebraic integer rings is decidable~\cite{Tung5}.
\end{enumerate}
\end{thm}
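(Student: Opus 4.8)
The plan is to read this theorem as a digest of the cited works and to exhibit the mechanism common to parts (2)--(5), pointing out where the genuine work of each reference lies. The unifying idea is that a $\forall^1\exists^1$ sentence $\forall x\,\exists y\,\varphi(x,y)$, with $\varphi$ quantifier-free in $\mathcal{L}_{\mathrm{ring}}$, can be reduced by putting $\varphi$ in disjunctive normal form to a finite Boolean combination of polynomial equalities and inequations, and then eliminating $y$. Over a field (part (3)) elimination theory shows that for each fixed value of $x$ the inner condition $\exists y\,\varphi(x,y)$ cuts out a constructible set $C\subseteq\mathbb{A}^1$ whose defining data are effectively computable, and the sentence holds in a field $F$ exactly when $C(F)=F$. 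The key step is then a dichotomy on $x$: either $x$ is transcendental over the prime field, in which case the statement collapses to a single question about the constructible set over the rational function field $\mathbb{Q}(t)$ in characteristic $0$ (and over $\mathbb{F}_p(t)$ in characteristic $p$), using that a nonempty constructible set defined over a field stays nonempty over its algebraic closure; or $x$ is algebraic over the prime field, in which case only the small subfield generated by $x$ is relevant and there remain finitely many constructible conditions to inspect. This separation of the generic fibre from finitely many special fibres is what turns the a priori infinitary $\forall x$ into a finite computation, and it is the heart of Tung's arguments in \cite{Tung3,Tung4,Tung5}; the versions for all fields, for fields of characteristic $0$, and for integral domains differ only in how the special fibres (the prime fields $\mathbb{F}_p$, respectively $\mathbb{Q}$, respectively the treatment of zero divisors) are enumerated and tested, and in checking that the generic analysis in characteristic $p$ stabilises for all but finitely many $p$.

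For the arithmetic structures $\mathbb{N},\mathbb{Z},\mathbb{Q}$ and number fields (parts (1),(2)), the same reduction produces, for each value $a$ of $x$ in the domain, a one-variable problem: does $f(a,y)=0$, together with finitely many inequations, have a solution in the domain? Geometrically this asks whether the projection of an affine plane curve to the $x$-line is surjective onto the relevant set of integral or rational points. Here I would bring in the effective diophantine input that is genuinely needed: Runge's method together with effective height bounds shows that outside a computably bounded finite set of exceptional $a$ the answer is dictated by the geometry of the curve at infinity, and for the finitely many remaining $a$ one decides a single univariate equation, which is elementary. The $\exists^1\forall^1$ clause of part (2) is treated dually, the roles of the generic and special analyses being interchanged: one must decide whether some value $a$ of $x$ makes the $\Pi_1$ condition $\forall y\,\varphi(a,y)$ hold, and again the candidates for $a$ reduce to a computable finite list.

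The main obstacle, and the place where the cited papers do real work rather than bookkeeping, is controlling the special fibres uniformly. Over $\mathbb{Z}$ and over number fields one must make the Runge-- and Baker-type bounds explicit enough to bound the exceptional set of $x$ effectively; over the classes of fields and rings one must prove the finiteness statement that a $\forall^1\exists^1$ sentence which fails somewhere in the class already fails at the generic point or at one of finitely many explicitly describable prime or ``small'' subfields (and, for rings of integers, that the family of all $\mathcal{O}_K$ contributes only finitely many essentially distinct constraints). Without such a finiteness/boundedness statement the outer $\forall x$ (or $\exists x$) cannot be discharged algorithmically. I therefore expect the proof of each part to consist of making the relevant finiteness or boundedness theorem effective in its own setting and then, in every case, reducing to deciding finitely many univariate polynomial instances over the structure in question.
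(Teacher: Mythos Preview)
The paper gives no proof of this theorem at all: it appears in the Introduction as a survey statement, each item carrying a citation to the paper where it is proved, and the author moves on immediately. So there is nothing to compare your attempt against on the paper's side.

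That said, a word on the content of your sketch. Your general architecture (DNF, eliminate $y$, separate generic behaviour of $x$ from finitely many special values) is the right shape, but you have misidentified the main engine in Tung's papers. The key device is not constructible-set elimination or a transcendental/algebraic dichotomy in the abstract; it is Hilbert's irreducibility theorem. Over $\mathbb{Q}$ (and more generally over any Hilbertian field), if $\forall x\,\exists y\,\varphi(x,y)$ holds then the generic solution is forced to be \emph{rational in $x$}: there is a linear factor $G(x)y-F(x)$, with $F,G\in\mathbb{Z}[x]$, dividing the relevant $f_{i,j}$ and missing the $g_{i,k}$. This is exactly the mechanism the present paper isolates later as Lemma~\ref{cor5}. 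Once one has such $F,G$, the residual work is to control the finitely many primes dividing the content of $G$ (for the class-of-fields results) or the finitely many $x$ where $G(x)$ vanishes or integrality fails (for $\mathbb{Z}$ and rings of integers). Your invocation of Runge's method and Baker-type bounds is off target for Tung's arguments; those tools are not what drive the decidability here.
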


\section{Decidability of $\exists^1 \forall^1$ theories}
In this section, we investigate the decidability of $\exists^1 \forall^1$ theory of class of Hilbertian fields of characteristic 0 and perfect Hilbertian fields. 
From Theorem~\ref{mthm4} in the Introduction, we know that the elementary theory of Hilbertian fields and PAC fields are both undecidable. Also, from~\cite[P. 304-305]{JR2}, we know that the elementary theory of any class of fields which contains $\mathbb{Q}$ is undecidable. Then the elementary theories of Hilbertian fields of characteristic 0 and perfect Hilbertian fields are both undecidable too. 

With undecidable theories above, we want to investigate what fragments of theory Hilbertian fields and PAC fields are decidable ? In the following paragraphs, we show that the elementary theory of Hilbertian PAC fields of characteristic 0 is decidable first. Using this result, we  prove that the $\exists^1 \forall^1$ theory of Hilbertian fields of characteristic 0 and perfect Hilbertian fields are both decidable. 
Let $Frob(K,G)$ be the class of perfect Frobenius fields M containing a field K with all finite quotient $Im(Gal(M))$ of absolute Galois groups Gal(M) being isomorphic to finite quotient $Im(G)$. The following important theorem from \cite[Theorem 30.6.2]{FJ} shows that the elementary theory of the class Frob(K,G) of  is decidable.
\begin{thm}
\label{thm8}
Let K be a presented field with elimination theory and G a superprojective group such that Im(G) is primitive recursive. Then there exists a primitive recursive decision procedure for the theory of Frob(K,G).
\end{thm}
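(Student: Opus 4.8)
The plan is to establish this via the Galois stratification procedure. The first step is to give an explicit, recursively presented axiomatization of $\mathrm{Th}(\mathrm{Frob}(K,G))$. A perfect field $M \supseteq K$ lies in $\mathrm{Frob}(K,G)$ precisely when (i) $M$ is PAC, (ii) $\mathrm{Gal}(M)$ has the embedding property, and (iii) the family $\mathrm{Im}(\mathrm{Gal}(M))$ of finite quotients of $\mathrm{Gal}(M)$ coincides with $\mathrm{Im}(G)$. Each clause is a scheme of $\mathcal{L}_{\mathrm{ring}}$-sentences: ``$M$ is perfect and contains $K$'' is routine from the presentation of $K$; ``$M$ is PAC'' is the familiar scheme stating that every absolutely irreducible affine $K$-variety with a smooth point has an $M$-rational point; and, via the dictionary between finite separable extensions of $M$ and finite quotients of $\mathrm{Gal}(M)$, clauses (ii) and (iii) translate into sentences describing exactly which finite embedding problems and which finite groups occur. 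Since $\mathrm{Im}(G)$ is primitive recursive, this axiom set is primitive recursive.

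The core of the argument is the elimination step. One introduces the language of \emph{Galois stratifications} over $K$: a Galois stratification of affine $n$-space is a partition into finitely many normal, absolutely irreducible $K$-locally closed subsets, each carrying a finite Galois ring cover together with a conjugacy domain in the covering group. One then proves that, primitive-recursively in the presented data, every first-order formula is equivalent over the theory of perfect Frobenius fields to a \emph{Galois formula}, and that one variable can be eliminated at a time, so that every sentence reduces to a Boolean combination of ``Artin symbol'' assertions of the shape: the Frobenius conjugacy class attached (over $K$) to a given normal variety lies in a given conjugacy domain. This is where I expect the main obstacle: the elimination requires effective forms of Bertini--Noether (the Galois group of a cover is preserved on a dense open locus), effective computation of Galois groups of function-field extensions, effective normalization and decomposition into absolutely irreducible pieces, and careful bookkeeping of how conjugacy domains transform under passing to fibers. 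Exactly here the hypothesis that $K$ is a presented field \emph{with elimination theory} is used, since that is what renders these geometric manipulations primitive recursive.

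Finally, one shows that the truth in a perfect Frobenius field $M \supseteq K$ of each such Artin symbol sentence depends only on $\mathrm{Im}(\mathrm{Gal}(M))$. Because $M$ is PAC, if the conjugacy domain is compatible with the constraints imposed on $\mathrm{Gal}(M)$ then the relevant cover acquires an $M$-point with the prescribed decomposition class; the embedding property of $\mathrm{Gal}(M)$ is invoked to lift a solution realizing ``some'' quotient to one realizing exactly the required quotient, and superprojectivity of $G$ (that is, $G$ is projective and has the embedding property) guarantees both that $\mathrm{Frob}(K,G)$ is nonempty and that these lifts are available uniformly across the class. The converse direction is a specialization/counting argument. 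Hence each Artin symbol sentence has a truth value computable primitive-recursively from $\mathrm{Im}(G)$, and composing this with the elimination procedure of the previous paragraph yields the desired primitive recursive decision procedure for $\mathrm{Th}(\mathrm{Frob}(K,G))$.
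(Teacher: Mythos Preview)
The paper does not actually prove this theorem; it is quoted verbatim as \cite[Theorem~30.6.2]{FJ} and used as a black box. Your outline is a faithful high-level sketch of the Fried--Jarden argument via Galois stratification, so in spirit it matches the source the paper is citing, but there is no ``paper's own proof'' to compare against here.
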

We prove that the decidability of $\omega-$free PAC fields of characteristic 0 using this key fact.
\begin{thm}
\label{thm9}
The elementary theory of $\omega$-free PAC fields of characteristic 0 is decidable.
\end{thm}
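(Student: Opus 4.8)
The plan is to realize the class of $\omega$-free PAC fields of characteristic $0$ as a class of the form $Frob(K,G)$ and then to quote Theorem~\ref{thm8}. The natural candidates are $K=\mathbb{Q}$, the prime field of characteristic $0$, and $G=\widehat{F}_\omega$, the free profinite group of countably infinite rank.

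First I would verify that $(\mathbb{Q},\widehat{F}_\omega)$ meets the hypotheses of Theorem~\ref{thm8}. The field $\mathbb{Q}$ is a presented field with elimination theory: it carries an explicit presentation and the classical factorization and splitting algorithms for polynomials are available over it (see~\cite{FJ}). The group $\widehat{F}_\omega$ is superprojective, since free profinite groups are projective and $\widehat{F}_\omega$ has the embedding property (\cite{FJ}). Finally, every finite group is a quotient of $\widehat{F}_\omega$, so $Im(\widehat{F}_\omega)$ is the set of all finite groups up to isomorphism, for which membership is trivially decidable and in particular primitive recursive. Theorem~\ref{thm8} then provides a (primitive recursive) decision procedure for the elementary theory of $Frob(\mathbb{Q},\widehat{F}_\omega)$.

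It then remains only to identify $Frob(\mathbb{Q},\widehat{F}_\omega)$ with the class of $\omega$-free PAC fields of characteristic $0$; once these classes coincide they have the same elementary theory and the theorem follows. Unwinding the definitions, a field $M$ belongs to $Frob(\mathbb{Q},\widehat{F}_\omega)$ exactly when $M$ is a perfect Frobenius field with $\mathbb{Q}\subseteq M$ and $Im(Gal(M))\cong Im(\widehat{F}_\omega)$. Here $\mathbb{Q}\subseteq M$ amounts to $\mathrm{char}(M)=0$, and every field of characteristic $0$ is perfect, so ``perfect'' is automatic; ``Frobenius field'' means a PAC field whose absolute Galois group has the embedding property; and $Im(Gal(M))\cong Im(\widehat{F}_\omega)$ says that every finite group occurs as a quotient of $Gal(M)$. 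So $M\in Frob(\mathbb{Q},\widehat{F}_\omega)$ iff $M$ is a PAC field of characteristic $0$ whose absolute Galois group has the embedding property and realizes every finite group, i.e.\ (by the standard characterization: every finite embedding problem for $Gal(M)$ is solvable, cf.~\cite{FJ}) iff $M$ is an $\omega$-free PAC field of characteristic $0$.

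The invocation of Theorem~\ref{thm8} is the routine part. The step requiring real care --- the main obstacle --- is the group-theoretic bookkeeping in the previous paragraph: matching \cite{FJ}'s exact definitions of ``Frobenius field'', ``superprojective group'', ``$Im(G)$'', and ``$\omega$-free'', and in particular confirming the equivalence between ``the absolute Galois group has the embedding property and every finite group as a quotient'' and ``$\omega$-free'' in precisely the form in which \cite{FJ} sets these up (one should be careful whether a rank condition is built into the definition of $\omega$-freeness).
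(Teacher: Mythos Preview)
Your proposal is correct and follows essentially the same route as the paper: verify that $(\mathbb{Q},\widehat{F}_\omega)$ satisfies the hypotheses of Theorem~\ref{thm8}, and then identify $Frob(\mathbb{Q},\widehat{F}_\omega)$ with the class of $\omega$-free PAC fields of characteristic $0$. The paper's argument differs only cosmetically, passing through the isomorphism $Gal(M)\cong \widehat{F}_\omega$ (via \cite[Theorem~24.8.1]{FJ}) rather than phrasing the equivalence directly as ``embedding property plus every finite group as a quotient''; your cautionary remark about checking the precise definition of $\omega$-free in \cite{FJ} is well taken and is exactly the point that isomorphism addresses.
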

\begin{proof}
Denote the elementary theory of $\omega$-free PAC 
fields of characteristic 0 as Th($\omega$ -PAC0). From \cite{FJ} We know $\mathbb{Q}$ is a presented field with splitting algorithm through and has elimination theory by Proposition. Also, the free pro-$\mathcal{C}$ group of rank $\aleph_0$ with formation $\mathcal{C}$ of all finite groups, $\hat{F}_{\omega}$, is superprojective. Applying Theorem 24.8.1 in \cite{FJ}, we have Im($\hat{F}_{\omega}$)=$\mathcal{C}$ which is primitive recursive. Thus, we conclude that the elementary theory of Frob($\mathbb{Q},\hat{F}_{\omega}$), denoted as Th(Frob($\mathbb{Q},\hat{F}_{\omega}$)), is decidable through Theorem~\ref{thm8}.

Now to show Th($\omega$ -PAC0) = Th(Frob($\mathbb{Q},\hat{F}_{\omega})$), it's sufficient to show that the class of $\omega$-free PAC fields of characteristic 0 is the same class as $Frob(\mathbb{Q},\hat{F}_{\omega})$. Let K be an $\omega$-free PAC fields of characteristic 0. By definition, every finite embedding problem for the absolute Galois group Gal(K) is solvable. From Theorem 24.8.1 in \cite{FJ}, Gal(K) is isomorphic to $\hat{F}_{\omega}$. Using Theorem 24.8.1 in \cite{FJ}, we have Im(Gal(K))=$\mathcal{C}$ = Im($\hat{F}_{\omega}$) where $\mathcal{C}$ is the family of all finite groups and Gal(K) has the embedding property. Since K is of characteristic 0, K contains $\mathbb{Q}$. Therefore, we have $K \in Frob(\mathbb{Q},\hat{F}_{\omega})$. Conversely, suppose that M is a perfect Frobenius field containing $\mathbb{Q}$ with Im(Gal(M)) = Im($\hat{F}_{\omega}$) By definition, we know that Gal(M) has the embedding property and M is a PAC field. Then Gal(K) is isomorphic to $\hat{F}_{\omega}$. Note that Im(Gal(M)) consists of all finite groups through the equality setting above. This tells us that every finite embedding problem for Gal(M) is solvable. Therefore, M is an $\omega$-free PAC field of characteristic 0 and we finish our proof.
\end{proof}
\begin{cor}
\label{cor4}
The elementary theory of Hilbertian PAC fields of characteristic 0 is decidable.
\end{cor}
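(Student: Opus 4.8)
The plan is to derive Corollary~\ref{cor4} from Theorem~\ref{thm9} by observing that, in characteristic $0$, the class of Hilbertian PAC fields and the class of $\omega$-free PAC fields are literally the \emph{same} class of structures; once this identification is in place, decidability of the elementary theory transfers immediately.

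The one nontrivial ingredient is the Galois-theoretic characterization of Hilbertianity among PAC fields. I would invoke the classical result (a theorem essentially due to Roquette together with its converse; see \cite{FJ}) that a PAC field $K$ is Hilbertian if and only if $\mathrm{Gal}(K)$ is $\omega$-free, i.e.\ if and only if every finite embedding problem for $\mathrm{Gal}(K)$ is solvable. By the definition of an $\omega$-free PAC field already used in the proof of Theorem~\ref{thm9} (namely, a PAC field for which every finite embedding problem of $\mathrm{Gal}(K)$ is solvable), this last condition says precisely that $K$ is an $\omega$-free PAC field. Hence the Hilbertian PAC fields of characteristic $0$ are exactly the $\omega$-free PAC fields of characteristic $0$. (The characteristic $0$ hypothesis is convenient here, since it automatically makes every such field perfect, which is the form in which Frobenius/PAC fields enter Theorem~\ref{thm8} and therefore Theorem~\ref{thm9}.)

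With the two classes identified, the elementary theory of the Hilbertian PAC fields of characteristic $0$ and the elementary theory of the $\omega$-free PAC fields of characteristic $0$ are the same set of first-order ring sentences, so the former is decidable by Theorem~\ref{thm9}. In effect the proof is a dictionary entry translating the field-theoretic description (Hilbertian) into the group-theoretic one ($\omega$-free absolute Galois group); the genuinely substantive point — that a PAC field with $\omega$-free absolute Galois group is Hilbertian, and conversely — is already available in the literature, so no new argument is needed beyond citing it and unwinding the definitions.
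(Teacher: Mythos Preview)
Your proof is correct and is essentially identical to the paper's own argument: the paper also invokes the equivalence ``$\omega$-free PAC field $\Leftrightarrow$ Hilbertian PAC field'' (citing Theorem~5.10.3 of \cite{J} rather than \cite{FJ}) and then applies Theorem~\ref{thm9}. The only difference is the source cited for the Roquette-type characterization, not the logic of the proof.
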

\begin{proof}
 From Theorem 5.10.3 in \cite{J}, we know that an $\omega$-free PAC field is the same as a Hilbertian PAC field. Note that Th($\omega$ -PAC0) is decidable by Theorem~\ref{thm9}. Then we get the desired result.
\end{proof}

Now, we prove some techniques which will be used in the proof following theorems.

\begin{lem}
\label{cor5}
Let K be a Hilbertian field and $\varphi(\bar{x},y)$ be a formula in disjunctive normal form. If $\forall \bar{x} \exists y \varphi(\bar{x},y)$ is true in K, then there exists an i and polynomial $F(\bar{x})$ and $G(\bar{x})$ $\not \equiv 0$ over K such that in $K[\bar{x},y]$, $G(\bar{x})y-F(\bar{x})$ are irreducible common factor of each $f_{i,j}(\bar{x},y)$, $1 \leq j \leq m_i$, but not a factor of any $g_{i,k}(\bar{x},y)$, $1 \leq k \leq n_i$.
\end{lem}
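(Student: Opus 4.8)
The plan is to combine the multi-variable Hilbert property with the behaviour of polynomial gcd's under specialization. Write $\varphi(\bar{x},y)$ in the form $\bigvee_{i=1}^{s}\bigl(\bigwedge_{j=1}^{m_i} f_{i,j}(\bar{x},y) = 0 \,\wedge\, \bigwedge_{k=1}^{n_i} g_{i,k}(\bar{x},y) \neq 0\bigr)$, discarding any disjunct in which some $g_{i,k}\equiv 0$ (such a disjunct is never satisfiable). Let $r$ be the length of $\bar{x}$ and, for each $i$, let $A_i\subseteq K^r$ be the set of $\bar{t}$ for which the $i$-th disjunct holds for some $y\in K$. Since $\forall\bar{x}\,\exists y\,\varphi$ is true, $K^r=\bigcup_i A_i$; since $K^r$ is not thin (as $K$ is Hilbertian) and a finite union of thin sets is thin, I would fix $i_0$ with $A_{i_0}$ non-thin and, dropping the index, write $f_1,\dots,f_m$, $g_1,\dots,g_n$, $A:=A_{i_0}$. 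If $m=0$ the claim is immediate: pick a constant $c\in K$ with $g_k(\bar{x},c)\not\equiv 0$ for all $k$ and take $G=1$, $F=c$; so assume $m\ge 1$.

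Next I would pass to $d(\bar{x},y)\in K[\bar{x}][y]$, a primitive gcd of $f_1,\dots,f_m$ in $K(\bar{x})[y]$. If $\deg_y d=0$, then $\sum_j a_j f_j=h(\bar{x})$ for some $a_j\in K[\bar{x},y]$ and $0\neq h\in K[\bar{x}]$, so $f_1(\bar{t},y),\dots,f_m(\bar{t},y)$ have no common zero whenever $h(\bar{t})\neq 0$; since $\{h=0\}$ is thin but $A$ is not, this contradicts $A\neq\emptyset$. Hence $\deg_y d\ge 1$, and I would factor $d=c(\bar{x})\prod_\ell p_\ell(\bar{x},y)^{a_\ell}$ with $c\in K[\bar{x}]$ and each $p_\ell\in K[\bar{x}][y]$ primitive, irreducible over $K(\bar{x})$, of positive $y$-degree. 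The crucial ingredient is that there is a proper Zariski-closed $Z\subset\mathbb{A}^r$ — governing the vanishing of $c$, of the $y$-leading coefficients, and of the subresultant pivots that compute the gcd — such that for $\bar{t}\notin Z$ one has $\gcd_{K[y]}\bigl(f_1(\bar{t},y),\dots,f_m(\bar{t},y)\bigr)\sim d(\bar{t},y)\sim\prod_\ell p_\ell(\bar{t},y)^{a_\ell}$ in $K[y]$, every factor of the expected degree. So for $\bar{t}\in A\setminus Z$ the witness $y(\bar{t})\in K$ is a zero of some $p_\ell(\bar{t},y)$; partitioning the non-thin set $A\setminus Z$ by the least such $\ell$ and using once more that finite unions of thin sets are thin, I would obtain a non-thin $A'\subseteq A\setminus Z$ and a single index $\ell$, with $p:=p_\ell$, for which $p(\bar{t},y(\bar{t}))=0$ throughout $A'$.

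Now I would rule out $\deg_y p\ge 2$: since $K$ is Hilbertian and $p$ is irreducible over $K(\bar{x})$ with positive $y$-degree, the $\bar{t}$ for which $p(\bar{t},y)$ is not irreducible of degree $\deg_y p$ in $K[y]$ form a thin set, so some $\bar{t}\in A'$ gives $p(\bar{t},y)$ irreducible of degree $\ge 2$, which has no root in $K$ — contradicting $p(\bar{t},y(\bar{t}))=0$. Hence $\deg_y p=1$, say $p=G(\bar{x})y-F(\bar{x})$ with $G\not\equiv 0$ and $\gcd(G,F)=1$; being primitive and linear in $y$ it is irreducible in $K[\bar{x},y]$. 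From $p\mid d\mid f_j$ in $K(\bar{x})[y]$ and primitivity, Gauss's lemma gives $p\mid f_j$ in $K[\bar{x},y]$ for all $j$. Finally, if $p\mid g_k$ in $K[\bar{x},y]$ for some $k$, then enlarging $Z$ by $\{G=0\}$ and taking $\bar{t}\in A'\setminus\{G=0\}$ we get $y(\bar{t})=F(\bar{t})/G(\bar{t})$ and hence $g_k(\bar{t},y(\bar{t}))=0$, against the choice of the witness; so $p$ divides no $g_k$. This produces the desired $G(\bar{x})y-F(\bar{x})$ for $i=i_0$.

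The part I expect to be most delicate is arranging a single non-thin parameter set that simultaneously pins down the disjunct $i_0$, the irreducible factor $p_\ell$ with $p_\ell(\bar{t},y(\bar{t}))=0$, and all the genericity needed both for ``gcd commutes with specialization'' and for the Hilbert property; this hinges on the interplay between $K^r$ being non-thin and every auxiliary bad locus (subresultant vanishing, degree drops, reducibility loci) being thin, together with closure of thin sets under finite unions — and one should not overlook the degenerate case $m_i=0$.
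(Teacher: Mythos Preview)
Your argument is correct, but it proceeds along a genuinely different route from the paper's. The paper's proof is essentially a single invocation of Theorem~3.2 of \cite{Tung6}: for a Hilbertian field $K$, truth of $\forall\bar{x}\,\exists y\,\varphi(\bar{x},y)$ in $K$ forces $\exists y\,\varphi(\bar{X},y)$ to hold already in the rational function field $K(\bar{X})$, so there is a rational witness $p(\bar{X})=F(\bar{X})/G(\bar{X})$ making one disjunct of $\varphi$ true generically; the Factor Theorem and Gauss's lemma then immediately give $G(\bar{x})y-F(\bar{x})$ as the required irreducible linear factor. You instead work from first principles with the thin-set formulation of Hilbertianity: pigeonhole onto a non-thin disjunct, pass to the generic $\gcd$ of the $f_j$ in $K(\bar{x})[y]$, control its behaviour under specialization via subresultant loci, and then use Hilbert irreducibility to force the relevant irreducible factor to have $y$-degree one. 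What your approach buys is self-containment --- you never need the black-box ``Skolemization over $K(\bar{X})$'' lemma, and the role of the Hilbertian hypothesis is made fully explicit (once to get a non-thin disjunct, once to rule out $\deg_y p\ge 2$). What the paper's approach buys is brevity and the avoidance of all the genericity bookkeeping (specialization of $\gcd$'s, degree drops, choice of witnesses) that you rightly flag as the delicate part of your plan; once Tung's theorem is in hand, the generic witness does all of that work in one stroke.
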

\begin{proof}
From Theorem 3.2 in \cite{Tung6}, we can find $p(\bar{x}) \in $K($\bar{X}$) such that $\varphi(\bar{x},p(\bar{x}))$ is true in K($\bar{X}$) since $\bar{\varphi}(y) \equiv \varphi(\bar{x},y)$ over K($\bar{X}$). Then there exists an $1 \leq i \leq s$ so that $\bigwedge_{j=1}^{m_i}f_{i,j}(\bar{x},p(\bar{x}))=0 \wedge \bigwedge_{k=1}^{n_i}g_{i,k}(\bar{x},p(\bar{x})) \neq 0$ is true in K($\bar{X}$). Write $p(\bar{x})= \frac{F(\bar{x})}{G(\bar{x})}$ where F($\bar{x}$) and G($\bar{x}$) are relatively prime and F($\bar{x}$),G($\bar{x}$)$\in$ K[$\bar{x}$] with G($\bar{x}$)$\not \equiv 0$. By Factor Theorem, $y-\frac{F(\bar{x})}{G(\bar{x})}$ is an irreducible common factor of each $f_{i,j}(\bar{x},y)$, but not a factor of any $g_{i,k}(\bar{x},y)$ for $1 \leq k \leq n_i$. Then Gauss lemma shows that G($\bar{x}$)y-F($\bar{x}$) is an irreducible common factor of each $f_{i,j}(\bar{x},y)$ but not a factor of any $g_{i,k}(\bar{x},y)$ for $1 \leq k \leq n_i$. This finishes our proof.
\end{proof}
Through this lemma, we can prove the preservation theorem of $\forall^1 \exists^1$ sentences over Hilbertian field.
\begin{prop}
\label{prop10}
Let K be a Hilbertian field contained in a field F. If K is algebraically closed in F, then F satisfies all $\forall^1 \exists^1 $ sentences true in K.
\end{prop}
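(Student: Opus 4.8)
The plan is to reduce everything to Lemma~\ref{cor5} and then use algebraic closedness to dispose of a finite exceptional set. Let $\forall x\exists y\,\varphi(x,y)$ be a $\forall^1\exists^1$ sentence true in $K$, and without loss of generality put $\varphi$ in disjunctive normal form,
\[
\varphi(x,y)\equiv\bigvee_{i=1}^{s}\Bigl(\bigwedge_{j=1}^{m_i}f_{i,j}(x,y)=0\ \wedge\ \bigwedge_{k=1}^{n_i}g_{i,k}(x,y)\neq0\Bigr),
\]
with all $f_{i,j},g_{i,k}\in\mathbb{Z}[x,y]$ (so these polynomials make sense over any field). Since $K$ is Hilbertian, Lemma~\ref{cor5} (applied with the single variable $x$ in place of $\bar x$) yields an index $i$ and polynomials $F(x),G(x)\in K[x]$ with $G\not\equiv0$ such that $G(x)y-F(x)$ is an irreducible common factor in $K[x,y]$ of every $f_{i,j}$, $1\le j\le m_i$, but is not a factor of any $g_{i,k}$, $1\le k\le n_i$.

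Next I would isolate the "bad" first coordinates. For each $k$, since $G(x)y-F(x)$ does not divide $g_{i,k}(x,y)$ in $K[x,y]$ (equivalently, by the Factor Theorem over the field $K(x)$ together with Gauss's Lemma and $\gcd(F,G)=1$, since $g_{i,k}(x,F(x)/G(x))\neq0$ in $K(x)$), we may clear denominators and write $g_{i,k}(x,F(x)/G(x))=N_{i,k}(x)/G(x)^{d_{i,k}}$ with $N_{i,k}\in K[x]$, $N_{i,k}\not\equiv0$. Set $P(x):=G(x)\prod_{k=1}^{n_i}N_{i,k}(x)\in K[x]$, a nonzero polynomial. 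Now take an arbitrary $a\in F$; I claim $F\models\exists y\,\varphi(a,y)$. If $P(a)\neq0$, then $G(a)\neq0$, so $b:=F(a)/G(a)\in F$ is defined: because $G(x)y-F(x)$ divides each $f_{i,j}$ in $K[x,y]$ we get $f_{i,j}(a,b)=0$ for all $j$, and because $P(a)\neq0$ we get $g_{i,k}(a,b)=N_{i,k}(a)/G(a)^{d_{i,k}}\neq0$ for all $k$; hence the $i$-th disjunct of $\varphi(a,b)$ holds in $F$.

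It remains to treat the (finitely many) $a\in F$ with $P(a)=0$. Each such $a$ is a root of the nonzero polynomial $P\in K[x]$, hence algebraic over $K$; since $a\in F$ and $K$ is algebraically closed in $F$, in fact $a\in K$. As $K\models\forall x\exists y\,\varphi(x,y)$, there is $b\in K\subseteq F$ with $K\models\varphi(a,b)$, and since $\varphi$ is quantifier-free and $K$ is a substructure of $F$, also $F\models\varphi(a,b)$. Combining the two cases gives $F\models\forall x\exists y\,\varphi(x,y)$.

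I expect essentially all of the substantive content to be carried by Lemma~\ref{cor5} — that is where Hilbertianity is used, via the existence of a rational-function witness over $K(x)$. In the argument above the only genuine idea is that the exceptional first coordinates are forced to be algebraic over $K$ and therefore already lie in $K$, after which quantifier-free absoluteness between $K$ and $F$ finishes things; the one point that needs a little care is the divisibility/evaluation equivalence "$G(x)y-F(x)\nmid g_{i,k}$ in $K[x,y]$ $\iff$ $g_{i,k}(x,F(x)/G(x))\neq0$ in $K(x)$", handled exactly as in the proof of Lemma~\ref{cor5}.
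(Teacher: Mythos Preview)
Your proof is correct and follows essentially the same route as the paper: both invoke Lemma~\ref{cor5} to produce the rational-function witness $y=F(x)/G(x)$, and then use algebraic closedness of $K$ in $F$ to deal with the finitely many problematic first coordinates. The only cosmetic difference is that the paper argues by contradiction (choosing a counterexample $x'\in F$, observing $x'\notin K$ by quantifier-free absoluteness, hence $x'$ is transcendental over $K$, so every nonzero polynomial in $K[x]$ is nonzero at $x'$), whereas you give a direct proof with an explicit case split on whether your auxiliary polynomial $P(x)$ vanishes at $a$; the underlying content is identical.
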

\begin{proof}
Suppose that there exists an $\forall^1 \exists^1$ sentence which is true in K but false in F. Reduce $\varphi(x,y)$ to disjunctive normal form. Thus, $\varphi(x,y)$ is logically equivalent to $\bigvee_{i=1}^s[\bigwedge_{j=1}^{m_i}f_{i,j}(x,y)=0\wedge \bigwedge_{k=1}^{n_i}g_{i,k}(x,y) \neq 0]$ where $f_{i,j}(x,y)$ and $g_{i,k}(x,y)$ are polynomials over $\mathbb{Z}$. By Corollary~\ref{cor5}, there exists an i and polynomials F(x),G(x)$\in$K[x], with G(x)$\not \equiv 0$, such that G(x)y-F(x) is an irreducible common factor of the polynomials $f_{i,j}(x,y)$ for $1 \leq j \leq m_i$, but not a factor of $g_{i,k}(x,y)$ for $1 \leq k \leq n_i$. Since F$\models \exists x \forall y  \neg \varphi(x,y)$, choose $x'\in $F so that F$\models \forall y \neg \varphi(x',y)$. Because K$\models \forall x \exists y \varphi(x,y)$, $x' \in $F$-$K by preservation theorem of universal sentence. Since K is algebraically closed in F, G($x'$)$\neq 0$. Then $f_{i,j}(x,\frac{F(x)}{G(x)})=0$ for every $1 \leq j \leq m_i$ shows that F$\models \bigwedge_j  f_{i,j}(x',\frac{F(x')}{G(x')})=0$. Note that G(x)y-F(x) is not a factor of $g_{i,k}(x,y)$ in K(X)[y] for any $1 \leq k \leq n_i$, so $g_{i,k}(x,\frac{F(x)}{G(x)})\neq 0$ in K(X). Since x' is transcendental over K, we have $g_{i,k}(x',\frac{F(x')}{G(x')}) \neq 0$. Thus F$\models \bigwedge_j f_{i,j}(x',\frac{F(x')}{G(x')})=0 \wedge \bigwedge_k g_{i,k}(x',\frac{F(x')}{G(x')}) \neq 0$ for some i. Therefore, F$\models \exists y \varphi(x',y)$ with $y=\frac{F(x)}{G(x)}$ which contradicts to the assumption. This finishes our proof.
\end{proof}
Using Proposition~\ref{prop10}, Corollary~\ref{cor4}, we can get following series of decidable results about $\forall^1 \exists^1$ and $\exists^1 \forall^1$ theories of Hilbertian fields and PAC fields.
\begin{thm}
\label{thm16}
The $\exists^1 \forall^1$ of Hilbertian field of characteristic 0 is the same as the $\exists^1 \forall^1$ theory of Hilbertian PAC fields of characteristic 0.
\end{thm}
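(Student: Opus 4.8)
We show that the two theories coincide by establishing mutual inclusion. Write $\mathbf H_0$ for the class of Hilbertian fields of characteristic $0$ and $\mathbf{HP}_0$ for the class of Hilbertian PAC fields of characteristic $0$. Every member of $\mathbf{HP}_0$ is in particular a member of $\mathbf H_0$, so any $\exists^1\forall^1$ sentence true throughout $\mathbf H_0$ is true throughout $\mathbf{HP}_0$; hence the $\exists^1\forall^1$ theory of $\mathbf H_0$ is contained in that of $\mathbf{HP}_0$. The substance of the theorem is the reverse inclusion.

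So let $\varphi$ be an $\exists^1\forall^1$ sentence true in every field of $\mathbf{HP}_0$, and let $K\in\mathbf H_0$ be arbitrary; we must show $K\models\varphi$. The plan is to embed $K$ regularly into a field $L\in\mathbf{HP}_0$, i.e.\ with $K\subseteq L$ and $K$ algebraically closed in $L$ (in characteristic $0$ this is just the assertion that $L/K$ is regular). Granting such an $L$, argue by contradiction: if $K\not\models\varphi$ then $K\models\neg\varphi$, where $\neg\varphi$ is logically equivalent to a $\forall^1\exists^1$ sentence; since $K$ is Hilbertian and algebraically closed in $L$, Proposition~\ref{prop10} forces $L\models\neg\varphi$; but $L\in\mathbf{HP}_0$ gives $L\models\varphi$ by hypothesis, a contradiction. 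Hence $K\models\varphi$, and the reverse inclusion — hence the theorem — follows.

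Everything therefore reduces to the claim that every Hilbertian field $K$ of characteristic $0$ has a regular extension $L$ lying in $\mathbf{HP}_0$. By Theorem~5.10.3 of \cite{J} (already used for Corollary~\ref{cor4}), a PAC field is Hilbertian precisely when it is $\omega$-free, so it suffices to build a regular extension $L/K$ with $L$ a PAC field whose absolute Galois group is free of infinite rank — such a group is $\omega$-free, i.e.\ solves every finite embedding problem. One constructs $L$ as the union of a tower $K=E_0\subseteq E_1\subseteq\cdots$, alternating two moves: (i) adjoin to $E_n$ generic points of the absolutely irreducible varieties defined over $E_n$ — each such step is the function field of an absolutely irreducible variety, hence a regular extension of $E_n$, and doing this exhaustively along the tower makes $L$ PAC; and (ii) solve the finite embedding problems for $\mathrm{Gal}(E_n)$ geometrically, through a rational function field $E_n(t)$, so that each $E_{n+1}/E_n$ stays regular while $\mathrm{Gal}(L)$ ends up free of infinite rank (cf.\ \cite{FJ}). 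Since every stage is regular over the previous one, $L/K$ is regular, $\mathrm{char}\,L=0$, and $L\in\mathbf{HP}_0$.

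I expect move (ii) to be the real obstacle. The obvious way to solve an embedding problem over $E_n$ is to pass to a finite separable extension, but that would destroy the requirement that $K$ be algebraically closed in $L$ — precisely the hypothesis Proposition~\ref{prop10} cannot do without. The fix is to solve embedding problems \emph{geometrically}: pull the problem back over $E_n(t)$ and invoke the known solvability of finite (split, then general) embedding problems over rational function fields, so that the resulting fields remain regular over $E_n$ and, in the limit, over $K$. Packaging this into a single citable statement — "every field of characteristic $0$ has a regular $\omega$-free PAC extension", cf.\ \cite{FJ} — is what makes the argument go through cleanly, and then Proposition~\ref{prop10} (together with the identification of $\mathbf{HP}_0$ with the $\omega$-free PAC fields of characteristic $0$ underlying Corollary~\ref{cor4}) does the rest.
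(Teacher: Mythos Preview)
Your proof is correct and follows the same strategy as the paper: obtain the nontrivial inclusion by contradiction via Proposition~\ref{prop10} applied to a regular Hilbertian PAC extension of $K$. The paper dispatches the existence of such an extension in one line by citing Proposition~13.4.6 of \cite{FJ} (every field has a regular extension which is PAC and Hilbertian), so your two-paragraph construction sketch---while plausible in spirit---is unnecessary and can be replaced by that citation.
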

\begin{proof}
Since every Hilbertian PAC field of characteristic 0 is a Hilbertian field of characteristic 0, the $\exists^1 \forall^1$ theory of Hilbertian fields of characteristic 0 is contained in the $\exists^1 \forall^1 $ theory of Hilbertian PAC fields of characteristic 0. We want to show that these two sets are in fact the same. Suppose that there is an $\exists^1 \forall^1$ sentence $\exists x \forall y \varphi(x,y)$ with $\varphi(x,y)$ quantifier-free which is true in every Hilbertian PAC fields of characteristic 0 but false in a Hilbertian field K of characteristic 0.From Proposition 13.4.6 in \cite{FJ}, we know that Every field K has a regular extension F which is PAC and Hilbertian.Denote this Hilbetian PAC field with characteristic 0 as F. Using Proposition~\ref{prop10}, we have $F \models \forall x  \exists y \neg  \varphi(x,y)$ which contradicts to the assumption.
\end{proof}

\begin{cor}
\label{cor7}
The $\exists^1 \forall^1$ theory of Hilbertian fields of characteristic 0 is decidable
\end{cor}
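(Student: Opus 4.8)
The plan is to deduce the corollary directly from Theorem~\ref{thm16} together with Corollary~\ref{cor4}; the argument is then essentially a one-line reduction followed by a routine observation about decidable fragments. First I would invoke Theorem~\ref{thm16}: as sets of sentences, the $\exists^1 \forall^1$ theory of the class of Hilbertian fields of characteristic $0$ and the $\exists^1 \forall^1$ theory of the class of Hilbertian PAC fields of characteristic $0$ are literally equal. Hence it suffices to exhibit a decision procedure for the latter.

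For that I would use Corollary~\ref{cor4}, which provides a decision procedure for the full elementary theory $T$ of Hilbertian PAC fields of characteristic $0$. The $\exists^1 \forall^1$ theory of this class is exactly $T \cap S$, where $S$ is the (recursive) syntactic class of $\exists^1 \forall^1$ sentences in the language of rings; in particular, given an input sentence already displayed in the form $\exists x\, \forall y\, \varphi(x,y)$ with $\varphi$ quantifier-free — the shape in which such sentences naturally arise — one need only run the decision procedure for $T$. Therefore $T \cap S$ is decidable, and by the equality of Theorem~\ref{thm16} so is the $\exists^1 \forall^1$ theory of Hilbertian fields of characteristic $0$.

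I do not expect any genuine obstacle at this stage: all of the mathematical content has already been absorbed into Theorem~\ref{thm16} (whose proof rests on the transfer Proposition~\ref{prop10} and on the existence of a regular Hilbertian PAC extension of an arbitrary field) and into Corollary~\ref{cor4} (which goes back via Theorem~\ref{thm9} to the Fried--Jarden decidability result for Frobenius fields, Theorem~\ref{thm8}). The one point I would be careful to state explicitly is that intersecting a decidable theory with the recursive class of $\exists^1 \forall^1$ sentences again yields a decidable set; this is immediate, and it is the only thing needed beyond the two cited results.
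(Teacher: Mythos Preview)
Your proposal is correct and follows essentially the same route as the paper: invoke Theorem~\ref{thm16} to identify the $\exists^1\forall^1$ theory of Hilbertian fields of characteristic~$0$ with that of Hilbertian PAC fields of characteristic~$0$, and then use Corollary~\ref{cor4} to conclude decidability. The only difference is that you make explicit the trivial point that restricting a decidable theory to the recursive class of $\exists^1\forall^1$ sentences remains decidable, which the paper simply takes for granted.
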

\begin{proof}
From Corollary~\ref{cor4}, the $\exists^1 \forall^1$ theory of Hilbertian PAC fields of characteristic 0 is decidable. Then the Theorem~\ref{thm16} tell us the result we want.
\end{proof}

Actually, we can extend the above result to perfect Hilbertian fields. We quote the following theorem\cite{FJ} about decidability of elementary theory of perfect Frobenius fields.
\begin{thm}
\label{thm14} 
Let $\mathcal{C}$ be a primitive recursive full family of finite groups. Then the theory of perfect Frobenius fields M such that each Gal(M) is a pro-$\mathcal{C}$ group is primitive recursive.
\end{thm}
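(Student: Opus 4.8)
This is a known theorem of Fried--Jarden, proved by the same machinery that underlies Theorem~\ref{thm8}; here is the strategy I would follow. The class of perfect Frobenius fields $M$ with $\mathrm{Gal}(M)$ pro-$\mathcal{C}$ decomposes, according to the characteristic of $M$ and the isomorphism type of $\mathrm{Gal}(M)$ (which is projective with the embedding property, hence superprojective, hence of the kind handled in Theorem~\ref{thm8}), into a union of classes of the form $Frob(K,G)$ with $K$ a prime field and $G$ a superprojective pro-$\mathcal{C}$ group. Each prime field $\mathbb{Q}$, $\mathbb{F}_p$ is a presented field with elimination theory, and $\mathrm{Im}(G)\subseteq\mathcal{C}$ is primitive recursive because $\mathcal{C}$ is, so Theorem~\ref{thm8} already makes each single piece primitive-recursively decidable. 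The difficulty is that there are infinitely many pieces and a sentence lies in the theory exactly when it is true in all of them, so the task is to reduce this infinite conjunction to a finite, uniform computation.

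The plan for that reduction is the standard one. First I would apply the Galois stratification procedure (as developed in \cite{FJ}): effectively and primitive-recursively, every sentence $\sigma$ in the language of rings is equivalent, over the class of all fields, to a Galois sentence, i.e.\ a Boolean combination of conditions asserting that a prescribed finite Galois datum (a finite group $H_\ell$ with a distinguished subgroup up to conjugacy) is realized over the field in question. The heart of the argument is then the elementary equivalence theorem for perfect Frobenius fields: for such an $M$, whether a given Galois condition holds depends only on (a) the characteristic of $M$, (b) the algebraic part $M\cap\widetilde{\mathbb{P}}$ over the prime field $\mathbb{P}$, and (c) the family $\mathrm{Im}(\mathrm{Gal}(M))$ of finite quotients of $\mathrm{Gal}(M)$ --- the embedding property is exactly what lets one recover the realized conjugacy-class data from the bare list of finite quotients, and perfectness keeps $M\cap\widetilde{\mathbb{P}}$ and $\mathrm{Gal}(M)$ well behaved. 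Hence $\sigma$ is in the theory iff an associated first-order condition $\Psi_\sigma$ holds of the triple $\bigl(\mathbb{P},\,M\cap\widetilde{\mathbb{P}},\,\mathrm{Im}(\mathrm{Gal}(M))\bigr)$ for every $M$ in the class.

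Finally I would unwind this. Only finitely many numerals occur in $\sigma$ and only finitely many finite groups $H_1,\dots,H_r$ occur in $\Psi_\sigma$, so the characteristic splits into a bounded list of small primes plus the two uniform cases ``$0$'' and ``$p$ large''; for each of these the algebraic part ranges over algebraic extensions that, by the PAC property, are themselves pinned down Galois-theoretically, and the relevant theory is decidable because $\mathbb{Q}$ is presented with elimination theory and a splitting algorithm (as already used in the proof of Theorem~\ref{thm9}) while each $\mathbb{F}_p$ is trivially so; and $\mathrm{Im}(\mathrm{Gal}(M))$ ranges over the subfamilies $\mathcal{D}\subseteq\mathcal{C}$ arising from superprojective pro-$\mathcal{C}$ groups, of which --- as far as $\Psi_\sigma$ can distinguish --- there are only finitely many types, governed by which of $H_1,\dots,H_r$ lie in $\mathcal{D}$ and how these are related by quotients and fibre products. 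Since $\mathcal{C}$ is primitive recursive, membership ``$H_\ell\in\mathcal{C}$?'' and these finitely many incidence conditions are decidable primitive-recursively, so checking $\Psi_\sigma$ against each type is a finite computation; composing the primitive-recursive stratification step with these verifications yields a primitive recursive decision procedure. The main obstacle is step two --- showing that over perfect Frobenius fields a Galois sentence sees nothing past the characteristic, the algebraic part, and the family of finite quotients, and showing it uniformly and primitive-recursively; this elementary equivalence theorem is where the Frobenius and perfectness hypotheses do all the work, and keeping every reduction primitive recursive rather than merely recursive is the other delicate point.
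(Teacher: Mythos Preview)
The paper does not prove this statement at all: it is simply quoted verbatim from Fried--Jarden \cite{FJ} as a black box, and then immediately applied (in the proof of Theorem~\ref{thm12}) with $\mathcal{C}$ the family of all finite groups. So there is no ``paper's own proof'' to compare against; any correct argument you supply is already more than what the paper does.

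That said, your sketch is a fair outline of the Fried--Jarden machinery (Galois stratification, the elementary equivalence theorem for perfect Frobenius fields reducing truth to data about the characteristic, the algebraic part over the prime field, and $\mathrm{Im}(\mathrm{Gal}(M))$, followed by a case analysis over finitely many ``types'' of this data). One caution: your decomposition into pieces $Frob(K,G)$ indexed by isomorphism types of $\mathrm{Gal}(M)$ is heuristically right but not how the actual proof is organized --- there are uncountably many such $G$, and Theorem~\ref{thm8} is not literally invoked piecewise. The real work, as you correctly identify at the end, is the uniform reduction via Galois stratification and the embedding lemma, which handles all $M$ at once rather than assembling decisions for individual $Frob(K,G)$'s. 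For the purposes of this paper, though, simply citing \cite[Theorem~30.7.2]{FJ} (or the appropriate numbering in the edition used) is what is expected.
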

From this theorem, we have the decidability of perfect $\omega$-free PAC fields.
\begin{thm}
\label{thm12}
The elementary theory of perfect $\omega$-free PAC fields is decidable.
\end{thm}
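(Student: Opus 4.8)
The plan is to mirror the proof of Theorem~\ref{thm9}, replacing the use of Theorem~\ref{thm8} --- which pins down a base field --- by the base-field-free Theorem~\ref{thm14}, so that no restriction on the characteristic is needed. Take $\mathcal{C}$ to be the family of all finite groups. First I would check that $\mathcal{C}$ is a primitive recursive full family of finite groups: fullness is immediate, since the family of all finite groups is trivially closed under subgroups, quotients, and extensions, and it is primitive recursive because the membership test for a presented finite group is trivial. Applying Theorem~\ref{thm14} with this $\mathcal{C}$ then shows that the elementary theory of the class of perfect Frobenius fields $M$ with $\mathrm{Gal}(M)$ a pro-$\mathcal{C}$ group --- and, as in the definition of $\mathrm{Frob}(K,G)$, with $\mathrm{Im}(\mathrm{Gal}(M)) = \mathcal{C}$ --- is primitive recursive, hence decidable.

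It then remains to identify this class, at the level of elementary sentences, with the class of perfect $\omega$-free PAC fields; in fact I would argue that the two classes coincide. For one inclusion: if $M$ is a perfect $\omega$-free PAC field, then every finite embedding problem for $\mathrm{Gal}(M)$ is solvable, so by Theorem 24.8.1 in \cite{FJ} the group $\mathrm{Gal}(M)$ has the embedding property and $\mathrm{Im}(\mathrm{Gal}(M)) = \mathcal{C}$; since $M$ is a perfect PAC field, $M$ is a perfect Frobenius field of the type handled by Theorem~\ref{thm14}. For the reverse inclusion: if $M$ is a perfect Frobenius field with $\mathrm{Im}(\mathrm{Gal}(M)) = \mathcal{C}$, then $M$ is PAC and $\mathrm{Gal}(M)$ has the embedding property; because in \cite{FJ} the embedding property is taken relative to $\mathrm{Im}(\mathrm{Gal}(M))$, and here this image is the family of all finite groups, every finite embedding problem for $\mathrm{Gal}(M)$ is in fact solvable, so $M$ is $\omega$-free. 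Combining the two inclusions with the decidability obtained from Theorem~\ref{thm14} finishes the argument.

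I expect the genuinely delicate point to be this identification of the two classes. One has to unwind the \cite{FJ} definition of the embedding property (stated relative to $\mathrm{Im}(\mathrm{Gal}(M))$) and reconcile it with the ``$\omega$-free'' condition that every finite embedding problem be solvable, using Theorem 24.8.1 in \cite{FJ} to pass between ``$\omega$-free PAC field'' and ``Frobenius field whose Galois group has the family of all finite groups as image''. One should also keep in mind that, strictly speaking, what is needed is agreement at the level of elementary theories rather than literal equality of classes --- the absolute Galois groups in question may have different ranks --- but, exactly as in the proof of Theorem~\ref{thm9}, such differences are invisible to first-order sentences. The remaining steps, namely verifying the hypotheses on $\mathcal{C}$ and quoting Theorems~\ref{thm14} and 24.8.1 in \cite{FJ}, are routine.
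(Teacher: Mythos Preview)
Your proposal is correct and follows essentially the same route as the paper: apply Theorem~\ref{thm14} with $\mathcal{C}$ the family of all finite groups, and then identify the resulting class of perfect Frobenius fields with the class of perfect $\omega$-free PAC fields. The paper's proof is terser---it invokes Corollary~24.8.2 of \cite{FJ} directly for the identification (perfect $\omega$-free PAC $\Leftrightarrow$ perfect Frobenius with $\mathrm{Gal}(K)\cong\hat F_{\omega}$) and then cites Theorem~31.1.4 of \cite{FJ}, whereas you unpack the argument via Theorem~24.8.1 much as the paper did in the proof of Theorem~\ref{thm9}---but the substance is the same.
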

\begin{proof}
From definition of Frobenius field and Corollary 24.8.2 in \cite{FJ}, we know that K is a perfect $\omega$-free PAC fields if and only if K is a perfect Frobenius field where Gal(K) is isomorphic to $\hat{F}_{\omega}$. Using Theorem 31.1.4 in \cite{FJ}, we have the desired result.
\end{proof}
\begin{thm}
\label{thm2}
The $\exists^1 \forall^1$ theory of perfect Hilbertian fields is decidable.
\end{thm}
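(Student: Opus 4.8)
The plan is to imitate, essentially verbatim, the chain of arguments that produced Corollary~\ref{cor7} in characteristic $0$, feeding in Theorem~\ref{thm12} wherever that chain used Theorem~\ref{thm9}. So I would proceed in three steps: (a) deduce that the elementary theory of perfect Hilbertian PAC fields is decidable; (b) prove the perfect analogue of Theorem~\ref{thm16}, namely that the $\exists^1\forall^1$ theory of perfect Hilbertian fields coincides with the $\exists^1\forall^1$ theory of perfect Hilbertian PAC fields; and (c) combine (a) and (b).

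Step (a) is the analogue of Corollary~\ref{cor4}. A PAC field is $\omega$-free if and only if it is Hilbertian (Theorem 5.10.3 in \cite{J}, already invoked for Corollary~\ref{cor4}), and this equivalence restricts to perfect fields; hence the class of perfect Hilbertian PAC fields is exactly the class of perfect $\omega$-free PAC fields, whose elementary theory is decidable by Theorem~\ref{thm12}.

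Step (b): one inclusion is trivial. For the other, suppose some $\exists^1\forall^1$ sentence $\exists x\,\forall y\,\varphi(x,y)$ with $\varphi$ quantifier-free holds in every perfect Hilbertian PAC field but fails in a perfect Hilbertian field $K$, so that $K\models\forall x\,\exists y\,\neg\varphi(x,y)$. By Proposition 13.4.6 in \cite{FJ}, $K$ has a regular extension $E$ that is PAC and Hilbertian. Pass to the perfect hull $F=E^{1/p^{\infty}}$ (take $F=E$ if $\mathrm{char}\,K=0$). Then $F$ is perfect; $F$ is still PAC and still Hilbertian, because both properties are inherited by purely inseparable algebraic extensions; and $K$ is algebraically closed in $F$, since $K$ is perfect and algebraically closed in $E$ (indeed, if $a\in\overline{K}\cap F$ then $a^{p^{n}}\in\overline{K}\cap E=K$ for some $n$, so $a\in K$). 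Therefore $F$ is a perfect Hilbertian PAC field, whence $F\models\exists x\,\forall y\,\varphi(x,y)$. But $K$ is Hilbertian and algebraically closed in $F$, so Proposition~\ref{prop10} yields $F\models\forall x\,\exists y\,\neg\varphi(x,y)$ --- a contradiction. Step (c) is then immediate: the theory in question sits inside the decidable theory of step (a) and, by step (b), equals it, so it is decidable.

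The only step I expect to require care is the claim used in (b) that the perfect hull of a PAC (respectively Hilbertian) field is again PAC (respectively Hilbertian). For a single purely inseparable layer this is standard; for the infinite tower $E\subseteq E^{1/p}\subseteq E^{1/p^{2}}\subseteq\cdots$ I would locate the preservation statements in \cite{FJ} (PAC-ness and Hilbertianity under purely inseparable algebraic extensions, together with the fact that such an extension does not alter the absolute Galois group) or else run a direct-limit argument over the finite subextensions $E^{1/p^{n}}$. If \cite{FJ} does not supply this cleanly, the alternative is to revisit the construction behind Proposition 13.4.6 of \cite{FJ} and arrange the regular PAC Hilbertian extension to be perfect from the outset, which bypasses the hull altogether.
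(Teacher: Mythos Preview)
Your approach is essentially the paper's: both identify perfect Hilbertian PAC fields with perfect $\omega$-free PAC fields (your step~(a)), invoke Theorem~\ref{thm12} for decidability, and then show the $\exists^1\forall^1$ theories of perfect Hilbertian fields and of perfect Hilbertian PAC fields coincide via Proposition~\ref{prop10} applied to a regular PAC Hilbertian extension (your step~(b)). The only divergence is in securing perfectness of the auxiliary extension: the paper simply takes the regular Hilbertian PAC extension $F$ of $K$ furnished by \cite[Prop.~13.4.6]{FJ} and asserts in one line that ``since $K$ is perfect, $F$ is also perfect,'' whereas you pass to the perfect hull $E^{1/p^{\infty}}$ and flag that PAC-ness and Hilbertianity must be checked to survive the purely inseparable tower. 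Your route is more cautious about exactly this point (and your fallback of arranging the extension to be perfect from the outset is precisely what the paper is implicitly relying on), but the architecture of the argument is identical.
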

\begin{proof}
Note first that the perfect Hilbertian PAC field is the same as the perfect $\omega$-free PAC field. We claim that the  $\exists^1 \forall^1$ theory of perfect Hilbertian fields and the $\exists^1 \forall^1$ theory of perfect Hilbertian PAC fields are identical. Then the result follows from Theorem~\ref{thm12}.

Since every perfect Hilbertian PAC fields is a perfect Hilbertian field, the $\exists^1 \forall^1$ theory of perfect Hilbertian fields is contained in the $\exists^1 \forall^1 $ theory of perfect Hilbertian PAC fields. Suppose that there is an $\exists^1 \forall^1$ sentence $\exists x \forall y \varphi(x,y)$ with $\varphi(x,y)$ quantifier-free which is true in all perfect Hilbertian PAC fields of but false in a perfect Hilbertian field K. Then there exists a regular extension F which is Hilbertain and PAC. Since K is perfect, F is also perfect. But Proposition~\ref{prop10} tells us that $F \models \forall x \exists y \neg \varphi(x,y)$ which contradicts to our assumption, This finishes our proof. 
\end{proof}

\section{Decidable $\forall^1 \exists^1$ theories}
Using the Theorems proved in previous section, we can give the proof of the decidability of $\forall^1 \exists^1$ theory of Hilbertian fields of characteristic 0. First, we need the following lemma about preservation of $\exists^n \forall^1$ sentences over Hilbertian fields.
\begin{lem}
\label{lem3}
Let K be a Hilbertian field and F is a regular extension of K. Then F satisfies all the $\exists^n \forall^1$ sentences true in K for arbitrary n.
\end{lem}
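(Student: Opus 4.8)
The plan is to show that a witness for the existential block can be taken inside $K$ itself, and that the inner $\forall^1$ assertion --- which constrains only one variable --- survives the passage to $F$. Let $\exists x_1 \cdots \exists x_n \forall y\, \varphi(\bar{x},y)$ be an $\exists^n \forall^1$ sentence true in $K$, with $\varphi$ quantifier-free, and fix $\bar{a} \in K^n$ with $K \models \forall y\, \varphi(\bar{a},y)$. I will prove $F \models \forall y\, \varphi(\bar{a},y)$; since $\bar{a} \in K^n \subseteq F^n$, this gives $F \models \exists \bar{x}\, \forall y\, \varphi(\bar{x},y)$, as wanted.

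First I would substitute $\bar{a}$ and put $\varphi(\bar{a},y)$ into disjunctive normal form, obtaining $\bigvee_{i=1}^{s}\bigl[\bigwedge_{j=1}^{m_i} f_{i,j}(y)=0 \wedge \bigwedge_{k=1}^{n_i} g_{i,k}(y)\neq 0\bigr]$ with $f_{i,j},g_{i,k}\in K[y]$. The central step is to isolate one disjunct $i_0$ in which every $f_{i_0,j}$ is the zero polynomial while every $g_{i_0,k}$ is a nonzero polynomial. To produce it: discard every disjunct that contains an identically-zero inequation, since such a disjunct is satisfied by no element; among the remaining disjuncts, any one not of the desired shape contains a genuinely nonzero polynomial $f_{i,j}$, hence is satisfied only at its finitely many roots. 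If there were no disjunct of the desired shape, the set of $b$ satisfying $\varphi(\bar{a},b)$ would be finite, contradicting $K\models\forall y\,\varphi(\bar{a},y)$ because a Hilbertian field is infinite.

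Finally I would fix such an $i_0$ and verify $\varphi(\bar{a},b)$ for an arbitrary $b\in F$. If $b\in K$, this holds by hypothesis. If $b\in F\setminus K$, then $b$ is transcendental over $K$ because $K$ is algebraically closed in $F$ --- this is the only place regularity of $F/K$ is used --- so each nonzero polynomial $g_{i_0,k}\in K[y]$ gives $g_{i_0,k}(b)\neq 0$, while the equations $f_{i_0,j}(b)=0$ hold vacuously; hence the $i_0$-th disjunct holds at $b$ and so does $\varphi(\bar{a},b)$. This yields $F\models\forall y\,\varphi(\bar{a},y)$ and completes the argument. I do not expect a real obstacle: the only delicate point is the bookkeeping in the disjunctive normal form, separating the disjuncts that are satisfied by nothing (those carrying an identically-zero inequation) from the one that is satisfied cofinitely (the one with no genuine equation); the hypotheses enter only through the facts that $K$ is infinite and that $K$ is algebraically closed in $F$.
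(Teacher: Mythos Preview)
Your argument is correct. The paper itself gives no details, deferring entirely to Proposition~2.4 of \cite{Tung4}; your proof is the natural elementary way to fill in that citation, and your closing observation is accurate: only the infiniteness of $K$ (a consequence of Hilbertianity) and the algebraic closedness of $K$ in $F$ (a consequence of regularity) are actually used, not the full strength of either hypothesis.
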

\begin{proof}
The proof of this lemma is essentially the same as the proof of Proposition 2.4 in \cite{Tung4}.
\end{proof}
\begin{thm}
\label{thm17}
The $\forall^1 \exists^1$ theory of Hilbertian fields of characteristic 0 is decidable.
\end{thm}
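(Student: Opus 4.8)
The plan is to show that the $\forall^1\exists^1$ theory of Hilbertian fields of characteristic $0$ coincides with the $\forall^1\exists^1$ theory of Hilbertian PAC fields of characteristic $0$, and then to invoke Corollary~\ref{cor4}. Indeed, once the two theories are identified, decidability is immediate: by Corollary~\ref{cor4} the full elementary theory of Hilbertian PAC fields of characteristic $0$ is decidable, and since the set of (sentences logically equivalent to) $\forall^1\exists^1$ sentences is recursive, the $\forall^1\exists^1$ fragment of that theory is decidable, hence so is the $\forall^1\exists^1$ theory of Hilbertian fields of characteristic $0$.

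One inclusion is trivial: every Hilbertian PAC field of characteristic $0$ is in particular a Hilbertian field of characteristic $0$, so every $\forall^1\exists^1$ sentence true in all Hilbertian fields of characteristic $0$ is true in all Hilbertian PAC fields of characteristic $0$.

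For the reverse inclusion I would argue by contradiction, in the same spirit as Theorems~\ref{thm16} and \ref{thm2}. Suppose $\forall x\,\exists y\,\varphi(x,y)$, with $\varphi$ quantifier-free, is true in every Hilbertian PAC field of characteristic $0$ but false in some Hilbertian field $K$ of characteristic $0$. Then $K\models\exists x\,\forall y\,\neg\varphi(x,y)$, which is an $\exists^1\forall^1$ sentence. By Proposition~13.4.6 in \cite{FJ}, $K$ has a regular extension $F$ that is both PAC and Hilbertian; since $K$ has characteristic $0$ and $K\subseteq F$, the field $F$ again has characteristic $0$. Now apply Lemma~\ref{lem3} with $n=1$: because $F$ is a regular extension of the Hilbertian field $K$, the $\exists^1\forall^1$ sentence $\exists x\,\forall y\,\neg\varphi(x,y)$, being true in $K$, is true in $F$ as well. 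But $F$ is a Hilbertian PAC field of characteristic $0$, so by hypothesis $F\models\forall x\,\exists y\,\varphi(x,y)$, contradicting $F\models\exists x\,\forall y\,\neg\varphi(x,y)$. Hence the two $\forall^1\exists^1$ theories coincide, and decidability follows.

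The only point requiring care is the invocation of Lemma~\ref{lem3}: one must check that the extension $F$ produced by Proposition~13.4.6 of \cite{FJ} is genuinely a regular field extension of $K$ (so that the lemma applies) and that the passage from $K$ to $F$ does not disturb characteristic $0$; both are routine. No new technical ingredient beyond Proposition~\ref{prop10}-style transfer (here packaged in Lemma~\ref{lem3}) and Corollary~\ref{cor4} is needed, so I do not anticipate a serious obstacle — the work is entirely in lining up the quantifier bookkeeping correctly.
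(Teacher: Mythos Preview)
Your proposal is correct and follows essentially the same route as the paper: identify the $\forall^1\exists^1$ theory of Hilbertian fields of characteristic $0$ with that of Hilbertian PAC fields of characteristic $0$ via Proposition~13.4.6 of \cite{FJ} and Lemma~\ref{lem3}, then conclude by Corollary~\ref{cor4}. The paper's proof is the same argument, presented more tersely.
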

\begin{proof}
From Corollary~\ref{cor4}, the $\forall^1 \exists^1$ theory of Hilbertian PAC fields of characteristic 0 is decidable. If we shows that the $\forall^1 \exists^1$ theory of Hilbertian fields of characteristic 0 and the $\forall^1 \exists^1$ theory of Hilbertian PAC fields of characteristic 0 are identical, then we get the desired result. 

Since every Hilbertian PAC fields of characteristic 0 is a Hilbertian field of characteristic 0, the $\forall^1 \exists^1$ theory of Hilbertian fields of characteristic 0 is contained in the $\forall^1 \exists^1 $ theory of Hilbertian PAC fields of characteristic 0. To show that these two sets are in fact the same, suppose that there exists an  $\forall^1 \exists^1$ sentence $\forall x \exists y \varphi(x,y)$ with $\varphi(x,y)$ quantifier-free which is true in every Hilbertian PAC fields of characteristic 0 but false in a Hilbertian field K of characteristic 0. From Proposition 13.4.6 in \cite{FJ}, we can find a Hilbertian PAC field F which is regular extension of K. Then Lemma~\ref{lem3} tell us that $F \models \exists x \forall y \neg \varphi(x,y)$ which contradicts to our assumption.
\end{proof}
\begin{rem}
The $\forall^1 \exists^1$ theory of Hilbertian fields of characteristic 0 is contained in the $\forall^1 \exists^1$ theory of number fields and containing the $\forall^1 \exists^1$ theory of fields of characteristic 0. From Theorem 3.1 in \cite{Tung4}, we know that the $\forall^1 \exists^1$ theory of number fields and  the $\forall^1 \exists^1$ theory of fields of characteristic 0 are equal. So the three $\forall^1 \exists^1$ theories above are all equal and  we get the decidability result through Theorem 3.3 in \cite{Tung4}.
\end{rem}
\begin{cor}
The $\forall^n \exists^1$ theory of Hilbertian fields of characteristic 0 is decidable for arbitrary positive integer n.
\end{cor}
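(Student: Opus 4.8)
The plan is to mimic the proof of Theorem~\ref{thm17} almost verbatim, replacing the single universally quantified variable by an $n$-tuple $\bar{x} = (x_1, \ldots, x_n)$. Concretely, I would show that the $\forall^n \exists^1$ theory of Hilbertian fields of characteristic $0$ coincides with the $\forall^n \exists^1$ theory of Hilbertian PAC fields of characteristic $0$; since the latter is a syntactically recognizable fragment of the full elementary theory of Hilbertian PAC fields of characteristic $0$, which is decidable by Corollary~\ref{cor4}, the decidability of the former follows at once.

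One inclusion is immediate: every Hilbertian PAC field of characteristic $0$ is in particular a Hilbertian field of characteristic $0$, so every $\forall^n \exists^1$ sentence true in all Hilbertian fields of characteristic $0$ is true in all Hilbertian PAC fields of characteristic $0$. For the reverse inclusion, suppose toward a contradiction that $\forall \bar{x}\, \exists y\, \varphi(\bar{x}, y)$, with $\varphi$ quantifier-free, holds in every Hilbertian PAC field of characteristic $0$ but fails in some Hilbertian field $K$ of characteristic $0$. Then $K \models \exists \bar{x}\, \forall y\, \neg\varphi(\bar{x}, y)$, an $\exists^n \forall^1$ sentence. By Proposition 13.4.6 in \cite{FJ} there is a regular extension $F$ of $K$ that is Hilbertian and PAC, and $F$ has characteristic $0$ because $K$ does. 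Now Lemma~\ref{lem3}, which is stated for arbitrary $n$, applies: $F$ satisfies every $\exists^n \forall^1$ sentence true in $K$, so $F \models \exists \bar{x}\, \forall y\, \neg\varphi(\bar{x}, y)$, i.e. $F \not\models \forall \bar{x}\, \exists y\, \varphi(\bar{x}, y)$. This contradicts the assumption that the sentence holds in all Hilbertian PAC fields of characteristic $0$, establishing the equality of the two theories.

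To finish, I would observe that one can effectively recognize whether a given sentence is (logically equivalent to) a $\forall^n \exists^1$ sentence and, if so, hand it to the decision procedure for the elementary theory of Hilbertian PAC fields of characteristic $0$ supplied by Corollary~\ref{cor4}; by the equality just proved, the verdict for Hilbertian PAC fields of characteristic $0$ is the verdict for Hilbertian fields of characteristic $0$. There is essentially no serious obstacle here, which is why I expect the argument to be this short: the heavy lifting was already done in Lemma~\ref{lem3} (preservation of $\exists^n \forall^1$ sentences under regular extensions of Hilbertian fields, proved by the same method as Proposition 2.4 of \cite{Tung4}) and in Corollary~\ref{cor4}. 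The only point worth a moment's care is that Lemma~\ref{lem3} is genuinely uniform in $n$, so that from $K \models \exists \bar{x}\, \forall y\, \neg\varphi$ one obtains a witnessing tuple in a regular extension by a construction that does not depend on $n$; but this is exactly what the statement of that lemma asserts, so nothing new is required.
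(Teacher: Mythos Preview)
Your proposal is correct and follows essentially the same approach as the paper: the paper also reduces to Corollary~\ref{cor4} by showing the $\forall^n \exists^1$ theories of Hilbertian fields and of Hilbertian PAC fields of characteristic $0$ coincide, invoking Lemma~\ref{lem3} and the regular Hilbertian PAC extension from \cite{FJ} exactly as you do. The paper's proof is terser (it just says ``directly modify the proof in Theorem~\ref{thm17}''), but your more explicit write-up is the intended argument.
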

\begin{proof}
From Corollary~\ref{cor4}, the $\forall^n \exists^1$ theory of Hilbertian PAC fields of characteristic 0 is decidable for arbitrary positive integer n. Since Lemma~\ref{lem3} shows the preservation of $\exists^n \forall^1$ sentences over Hilbertian fields, we can directly modify the proof in Theorem~\ref{thm17} to show that the $\forall^n \exists^1$ theory of Hilbertian fields of characteristic 0 and the $\forall^n \exists^1$ theory of Hilbertian PAC fields of characteristic 0 are identical for arbitrary positive integer n. This proves the result we want.
\end{proof}
Next, we prove the decidability of $\forall^1 \exists^1$ theory of Hilbertian fields. From Exercise in \cite{CK}, we can easily get the following preservation theorem of $\exists^n \forall^m$ sentences for all $m,n \in \mathbb{N}$.
\begin{prop}
\label{prop11}
Let $\mathcal{A},\mathcal{B}$ be  $\mathcal{L}$-structures. If $\mathcal{A}$ is existentially closed in $\mathcal{B}$. then every $\exists^n \forall^m$ sentence for all $m,n \in \mathbb{N}$ true in $\mathcal{A}$ is also true in $\mathcal{B}$.
\end{prop}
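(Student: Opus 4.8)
The plan is to prove this directly from the definition of existential closedness, exploiting the fact that after stripping off the leading block of existential quantifiers one is left with a purely universal formula. Write the sentence in prenex form as $\exists x_1\cdots\exists x_n\,\forall y_1\cdots\forall y_m\,\psi(\bar{x},\bar{y})$ with $\psi$ quantifier-free. Assuming $\mathcal{A}$ satisfies it, the first step is to fix a tuple $\bar{a}\in A^n$ witnessing the existential block, so that $\mathcal{A}\models\forall\bar{y}\,\psi(\bar{a},\bar{y})$. Since $A\subseteq B$, it then suffices to show that the \emph{same} tuple $\bar{a}$ works in $\mathcal{B}$, i.e. that $\mathcal{B}\models\forall\bar{y}\,\psi(\bar{a},\bar{y})$; from this $\mathcal{B}\models\exists\bar{x}\,\forall\bar{y}\,\psi(\bar{x},\bar{y})$ is immediate.

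The second step is to establish $\mathcal{B}\models\forall\bar{y}\,\psi(\bar{a},\bar{y})$ by contradiction. If it failed, we would have $\mathcal{B}\models\exists\bar{y}\,\neg\psi(\bar{a},\bar{y})$; since $\neg\psi$ is again quantifier-free and the parameters $a_1,\dots,a_n$ all lie in $A$, this is an existential statement over the parameter set $A$. Existential closedness of $\mathcal{A}$ in $\mathcal{B}$ is precisely the assertion that such a statement, if true in $\mathcal{B}$, is already true in $\mathcal{A}$; hence $\mathcal{A}\models\exists\bar{y}\,\neg\psi(\bar{a},\bar{y})$, contradicting the choice of $\bar{a}$. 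I would then conclude $\mathcal{B}\models\forall\bar{y}\,\psi(\bar{a},\bar{y})$, and therefore the proposition, observing that nothing in the argument depends on $m$, $n$, or the particular language $\mathcal{L}$, so the conclusion holds uniformly for all $m,n\in\mathbb{N}$.

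There is no substantial obstacle here — this is essentially the content of the referenced exercise in \cite{CK} — but the point that must be handled with care is the bookkeeping about parameters: the contradiction goes through only because the witnessing tuple $\bar{a}$ was chosen inside $\mathcal{A}$, so that $\neg\psi(\bar{a},\bar{y})$ is genuinely an $\mathcal{L}(A)$-formula to which the definition of existential closedness applies. It is also worth flagging that the statement is one-directional — it says $\exists^n\forall^m$ sentences are preserved \emph{upward}, from $\mathcal{A}$ to $\mathcal{B}$; the reverse direction, or any prefix with a $\forall$ occurring before an $\exists$, would require a strictly stronger hypothesis than mere existential closedness.
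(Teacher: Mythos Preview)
Your proof is correct and follows essentially the same contrapositive idea as the paper: assume the sentence fails in $\mathcal{B}$, obtain an existential statement (with parameters in $A$) holding in $\mathcal{B}$, and reflect it down to $\mathcal{A}$ via existential closedness to reach a contradiction. The only difference is cosmetic: the paper black-boxes the reflection step by citing the exercise in \cite{CK} (downward preservation of $\forall^n\exists^m$ sentences), whereas you unpack that exercise directly from the definition of existential closedness, making your argument self-contained.
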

\begin{proof}
If there exist an $\exists^n \forall^m$ sentence which is true in $\mathcal{A}$ but false in $\mathcal{B}$ for some $m,n \in \mathbb{N}$, then we can find that the negation of $\exists^n \forall^m$ sentence (i.e. $\forall^n \exists^m$ sentence) is also true in $\mathcal{A}$ by the Exercise in \cite{CK} and leads to a contradiction.
\end{proof}
But from Proposition 1 in \cite{RP}, we know that the existentially closed property can be characterized by transcendental extension in field theoretic sense.
\begin{thm}\cite{RP}
\label{thm3}
Let K be an infinite field and F is an extension field of K. If F is purely transcendental extension of K, then K is existentially closed in F.
\end{thm}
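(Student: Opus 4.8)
The plan is to unwind the model-theoretic notion of existential closedness into a purely algebraic statement about solving polynomial systems, and then exploit pure transcendence to specialize a solution from $F$ down to $K$, using the infinitude of $K$ to dodge a proper (lower-dimensional) bad locus.

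First I would reduce to the case $F = K(t_1,\dots,t_n)$ with $t_1,\dots,t_n$ algebraically independent over $K$: any existential formula with parameters in $K$ that holds in $F$ is witnessed by finitely many elements of $F$, and those elements lie in a subfield generated over $K$ by finitely many members of a transcendence basis. Next, put the existential formula into disjunctive normal form; one disjunct is satisfied in $F$, and a disjunct is a finite system of equations $f_1(\bar x)=\dots=f_r(\bar x)=0$ together with inequations $g_1(\bar x)\neq 0,\dots,g_s(\bar x)\neq 0$, all with coefficients in $K$ (the parameters from $K$ being fixed throughout). So it suffices to show: if such a system has a solution $\bar\alpha\in F^m$, it has one in $K^m$.

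Then I would write each coordinate $\alpha_i = p_i/q_i$ with $p_i,q_i\in K[t_1,\dots,t_n]$ and $q_i\neq 0$. The equations $f_j(\bar\alpha)=0$ hold as identities of rational functions in $K(t_1,\dots,t_n)$, so they persist under every specialization $t_\ell\mapsto c_\ell$ at which the $q_i$ do not vanish. The inequations say each $g_k(\bar\alpha)$ is a nonzero element of $K(t_1,\dots,t_n)$, i.e.\ the numerator $N_k$ of $g_k(\bar\alpha)$ (after clearing denominators) is a nonzero polynomial. Now form the single nonzero polynomial $D := \bigl(\prod_i q_i\bigr)\cdot\bigl(\prod_k N_k\bigr)\in K[t_1,\dots,t_n]$. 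Because $K$ is infinite, a nonzero polynomial cannot vanish on all of $K^n$, so there is $(c_1,\dots,c_n)\in K^n$ with $D(c_1,\dots,c_n)\neq 0$. Specializing $\bar\alpha$ at this point yields a well-defined tuple $\bar\alpha(\bar c)\in K^m$ satisfying every $f_j(\bar\alpha(\bar c))=0$ and every $g_k(\bar\alpha(\bar c))\neq 0$, which is the required solution in $K$; hence $K$ is existentially closed in $F$.

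I do not expect a serious obstacle. The only points needing care are the reduction to finitely many transcendentals, checking that the evaluation homomorphism $K[t_1,\dots,t_n]\to K$ extends to the localization containing all the $\alpha_i$ and $g_k(\bar\alpha)$ (which is exactly what $D(\bar c)\neq 0$ guarantees), and invoking the standard fact that a nonzero polynomial over an infinite field has a non-root — which is precisely where the hypothesis that $K$ is infinite enters.
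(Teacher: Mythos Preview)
Your argument is correct and is the standard proof of this fact: reduce to finitely many transcendentals, write the witnesses as rational functions in $K(t_1,\dots,t_n)$, and use infinitude of $K$ to specialize away from the zero locus of the product of denominators and of the numerators of the $g_k(\bar\alpha)$. All the care-points you flag (finiteness reduction, well-definedness of the evaluation, and where infinitude of $K$ is used) are handled properly.

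Note, however, that the paper does not actually give its own proof of this theorem: it is quoted from Ribenboim \cite{RP} (Proposition~1 there) and stated without proof. So there is nothing in the paper to compare your approach against beyond the citation; your write-up supplies a self-contained justification where the paper simply invokes the literature.
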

Note that the Theorem above require the fields to be infinite. But from \cite{FJ}, we know that the Hilbertian fields and PAC fields are infinite. 

Therefore, we first prove that the decidability of $\forall \exists $ theory of infinite fields.
\begin{prop}
\label{prop13}
The $\forall^1 \exists^1$ theory of infinite fields is decidable
\end{prop}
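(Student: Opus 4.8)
The plan is to reduce the problem to fields whose $\forall^1\exists^1$ theories are already understood. I would first prove the following reduction: a $\forall^1\exists^1$ sentence $\sigma\equiv\forall x\,\exists y\,\varphi(x,y)$, with $\varphi$ quantifier-free, is true in every infinite field if and only if it is true in every algebraic number field and in every rational function field $\mathbb{F}_q(t)$ with $q$ a prime power. The field $\mathbb{Q}(t)$ is subsumed under the number-field case, since by the Remark above a $\forall^1\exists^1$ sentence holding in all number fields holds in all fields of characteristic $0$. Granting the reduction, the number-field half is decidable by \cite{Tung3}, so everything comes down to deciding the $\forall^1\exists^1$ theory of the family $\{\mathbb{F}_q(t)\}$.

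For the reduction, one direction is trivial, since number fields and the fields $\mathbb{F}_q(t)$ are infinite. For the converse, suppose $\sigma$ fails in an infinite field $F$; then $\neg\sigma\equiv\exists x\,\forall y\,\psi(x,y)$, with $\psi=\neg\varphi$, holds in $F$, say with witness $a\in F$. Since $\forall y\,\psi(a,y)$ is universal in $y$ with parameter $a$, it passes to the subfield $K_0(a)$ generated by $a$ over the prime field $K_0$. If $F$ has characteristic $0$, then $K_0(a)$ is a number field (when $a$ is algebraic) or is isomorphic to $\mathbb{Q}(t)$ (when $a$ is transcendental), and we are done. If $F$ has characteristic $p$ and $a$ is transcendental over $\mathbb{F}_p$, then $K_0(a)\cong\mathbb{F}_p(t)$, done. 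The only delicate case is characteristic $p$ with $a$ algebraic, so that $K_0(a)=\mathbb{F}_q$ is finite; here I would exploit that $F$ is infinite. If $F$ has positive transcendence degree over $\mathbb{F}_p$, choose $s\in F$ transcendental over $\mathbb{F}_q$; then $\forall y\,\psi(a,y)$ holds in $\mathbb{F}_q(s)\cong\mathbb{F}_q(t)$, done. If instead $F$ is algebraic over $\mathbb{F}_p$ (hence an infinite algebraic extension), Theorem~\ref{thm3} gives that $F$ is existentially closed in $F(s)$, so by Proposition~\ref{prop11} the $\exists^1\forall^1$ sentence $\neg\sigma$ is still true in $F(s)$, which now has positive transcendence degree, and we are back to the previous case.

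It remains to decide whether $\sigma$ holds in every $\mathbb{F}_q(t)$. Fix a prime $p$. Each $\mathbb{F}_q(t)$ is a Hilbertian field, so by Proposition 13.4.6 in \cite{FJ} it has a regular extension $H_q$ that is Hilbertian and PAC, i.e.\ an $\omega$-free PAC field of characteristic $p$. Proposition~\ref{prop10}, applied with $K=\mathbb{F}_q(t)$ (which is algebraically closed in $H_q$, the extension being regular), shows that every $\forall^1\exists^1$ sentence true in $\mathbb{F}_q(t)$ is true in $H_q$; Lemma~\ref{lem3}, applied to the $\exists^1\forall^1$ sentence $\neg\sigma$, gives the reverse. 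Hence $\mathbb{F}_q(t)$ and $H_q$ have the same $\forall^1\exists^1$ theory, and the question becomes whether $\sigma$ is true in all $\omega$-free PAC fields of characteristic $p$ — the positive-characteristic counterpart of Theorem~\ref{thm9}, which I would establish along the lines of Theorem~\ref{thm8} and the Frobenius-field machinery, with the presented base field $\mathbb{F}_p$ and the free profinite group $\hat{F}_{\omega}$. Finally, only finitely many $q$ need be checked: a reduction-modulo-$p$ (Lefschetz-type) argument produces a bound, computable from $\sigma$, beyond which truth of $\sigma$ in $\mathbb{F}_q(t)$ is equivalent to truth in $\mathbb{Q}(t)$, already decided.

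The genuine obstacle, I expect, is the positive-characteristic function-field case, and within it two points: the extensions $H_q$ are honestly imperfect, so one must use a form of the Frobenius/PAC decidability result that allows a prescribed imperfection degree and field of constants, rather than the perfect case of Theorem~\ref{thm8}; and one needs effective control of the dependence on $q$ (the Lefschetz bound). By contrast, the characteristic-$0$ half is routine given \cite{Tung3}, and the awkward finite-subfield subtlety in the reduction is handled entirely by the hypothesis that the fields are infinite together with the RP theorem (Theorem~\ref{thm3}).
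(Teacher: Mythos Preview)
Your reduction in the first two paragraphs is sound and close in spirit to what the paper does, but from that point the routes diverge sharply. The paper proceeds far more directly and never invokes PAC or Frobenius machinery for this proposition. Assuming $\sigma$ holds in all characteristic-$0$ fields, it holds in $\mathbb{Q}$, and Lemma~\ref{cor5} (Hilbertianness of $\mathbb{Q}$) produces explicit $F(x),G(x)\in\mathbb{Z}[x]$ such that $G(x)y-F(x)$ is an irreducible common factor of the $f_{i,j}$ and not a factor of the $g_{i,k}$ for some disjunct $i$. From $F,G$ one computes an integer $b$ (a gcd of contents), and the paper then quotes Theorem~3.7 of \cite{Tung4}: $\sigma$ holds in every infinite field iff it holds in every field of characteristic $0$ \emph{and} the one-variable sentence $\exists y\,\varphi(t,y)$ holds in $\mathbb{F}_p(t)$ for each prime $p\mid b$. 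The first condition is decidable by \cite{Tung4}; the second is a finite list of existential checks in explicitly presented fields, handled by ordinary polynomial factorisation.

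This sidesteps both of the obstacles you flag. The ``Lefschetz bound'' you hope for is delivered concretely as the set of prime divisors of $b$: the rational-function witness $y=F(x)/G(x)$ already works in every characteristic not dividing $b$, so only those finitely many primes need separate treatment. And the residual positive-characteristic check is a single-variable existential problem in $\mathbb{F}_p(t)$, so imperfection never enters. By contrast, your detour through regular Hilbertian PAC extensions $H_q$ of $\mathbb{F}_q(t)$ genuinely requires a decidability theorem for \emph{imperfect} $\omega$-free PAC fields of fixed characteristic; Theorems~\ref{thm8} and~\ref{thm14} as stated cover only the perfect case, and extending them is nontrivial and outside the paper's toolkit. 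Likewise, without an explicit mechanism your Lefschetz step is a hope rather than an argument. So while your plan is not wrong in outline, the paper's approach is both more elementary and complete where yours leaves real gaps.
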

\begin{proof}
Let $\forall x \exists y \varphi(x,y)$ be an $\forall^1 \exists^1$ sentence. Reduce $\varphi(x,y)$ to disjunctive normal form:  $\bigvee_{i=1}^s[\bigwedge_{j=1}^{m_i}f_{i,j}(x,y)=0\wedge \bigwedge_{k=1}^{n_i}g_{i,k}(x,y) \neq 0]$. Suppose that $\forall x \exists y \varphi(x,y)$ is true in all fields of characteristic 0. Then $\forall x \exists y \varphi(x,y)$ in true in the rational number. Notice that $\mathbb{Q}$ is a Hilbertian field by Hilbert's irreducibility theorem. From Corollary~\ref{cor5}, there exist an i and polynomials $F(x),G(x) \in \mathbb{Q}[x]$, with $G(x) \not \equiv 0$, such that $G(x)y-F(x)$ is an irreducible common factor of $f_{i,j}(x,y)$ in $\mathbb{Q}[x,y]$ for $1 \leq j \leq m_i $ but not a factor of $g_{i,k}(x,y)$ for $1 \leq k \leq n_i$. As in $\mathbb{Q}$, we have a splitting algorithm to factor $f_{i,j}(x,y)$ and $g_{i,k}(x,y)$ for every $i,j,k$ over $\mathbb{Q}$ and looking for the polynomial $G(x)y-F(x)$ which satisfies our requirement. From Gauss' lemma, we may assume that F(x) and G(x) are polynomials over $\mathbb{Z}$. Let m be the maximal degree of y in $g_{i,k}(x,y)$ such that $g_{i,k}(x,\frac{F(x)}{G(x)})\cdot G(x)^m$ are over $\mathbb{Z}$ for all $1 \leq k \leq n_i$. Then consider the greatest common divisor of the contents of G(x) and $g_{i,k}(x,\frac{F(x)}{G(x)})\cdot G(x)^m$ for $1 \leq k \leq n_i$, and denote it by b. From the proof of Theorem 3.7 in \cite{Tung4}, we know that $ \forall x \exists y \varphi(x,y)$ is true in all infinite fields iff $\forall x \exists y \varphi(x,y)$ holds in all fields of characteristic 0 and $\exists y \varphi(t,y)$ holds in every rational function field $\mathbb{F}_p(t)$, where p is a prime divisor of b. Corollary 3.4 and final paragraphs of proof of Theorem 3.7 in \cite{Tung4} implies $ \forall x \exists y \varphi(x,y)$ is true in all infinite fields is decidable.
\end{proof}
From Theorem 13.4.2 in \cite{FJ}, every finitely generated transcendental extension of an arbitrary field is Hilbertian. This gives us a way to connect infinite fields and Hilbertian fields.
\begin{thm}
\label{thm18}
The $\forall^1 \exists^1$ theory of Hilbertian fields is decidable.
\end{thm}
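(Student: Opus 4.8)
The plan is to prove that the $\forall^1 \exists^1$ theory of Hilbertian fields is identical to the $\forall^1 \exists^1$ theory of infinite fields; decidability then follows immediately from Proposition~\ref{prop13}. The structure of the argument parallels that of Theorem~\ref{thm17}, with ``existentially closed'' (together with Theorem~\ref{thm3}) playing the role that ``regular extension'' (together with Lemma~\ref{lem3}) played there.

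One containment is free: since every Hilbertian field is infinite (a fact recalled above from \cite{FJ}), every $\forall^1\exists^1$ sentence valid in all infinite fields is in particular valid in all Hilbertian fields. For the converse I would argue by contradiction. Suppose $\forall x\,\exists y\,\varphi(x,y)$, with $\varphi$ quantifier-free, holds in every Hilbertian field but fails in some infinite field $K$. Pass to the purely transcendental extension $F=K(t)$. By Theorem 13.4.2 in \cite{FJ}, $F$ is a finitely generated transcendental extension of a field, hence Hilbertian, so $F\models\forall x\,\exists y\,\varphi(x,y)$. On the other hand $K$ is infinite, so Theorem~\ref{thm3} gives that $K$ is existentially closed in $F$. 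The sentence $\exists x\,\forall y\,\neg\varphi(x,y)$ is an $\exists^1\forall^1$ sentence true in $K$, so by Proposition~\ref{prop11} it is true in $F$ as well; this contradicts $F\models\forall x\,\exists y\,\varphi(x,y)$, and the argument is complete.

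With the two theories shown to coincide, Proposition~\ref{prop13} finishes the proof. I do not anticipate a serious obstacle: the logical core is a short diagram chase, and the only points to check with care are that $K(t)$ is a finitely generated transcendental extension so that Theorem 13.4.2 in \cite{FJ} applies to yield Hilbertianity, and that the infiniteness hypothesis of Theorem~\ref{thm3} is met (which it is, since the fields in play are infinite). If one wished, the same scheme applied to $K(t_1,\dots,t_n)$ would likewise give decidability of the $\forall^n\exists^1$ theory of Hilbertian fields.
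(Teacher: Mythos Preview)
Your proof is correct and is essentially identical to the paper's: both show that the $\forall^1\exists^1$ theory of Hilbertian fields coincides with that of infinite fields by passing from an infinite field $K$ to the Hilbertian field $K(t)$, invoking Theorem~\ref{thm3} for existential closedness and Proposition~\ref{prop11} to transfer the $\exists^1\forall^1$ sentence upward, then concluding via Proposition~\ref{prop13}. One small caveat on your closing aside: the preservation step (Proposition~\ref{prop11}) already handles arbitrary $\exists^n\forall^m$ sentences, so passing to $K(t_1,\dots,t_n)$ rather than $K(t)$ buys nothing; the genuine missing ingredient for a $\forall^n\exists^1$ result would be a $\forall^n\exists^1$ analogue of Proposition~\ref{prop13}, which the paper does not supply.
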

\begin{proof}
We claim that the $\forall^1 \exists^1$ theory of Hilbertian fields and $\forall^1 \exists^1$ theory of infinite fields are identical. Then Proposition~\ref{prop13} implies the result we want. Note that all the Hilbertian fields are infinite fields. So the $\forall^1 \exists^1$ theory of infinite fields is contained in the $\forall^1 \exists^1$ theory of Hilbertian fields. Conversely, suppose that there exists an  $\forall^1 \exists^1$ sentence $\forall x \exists y \varphi(x,y)$ with $\varphi(x,y)$ quantifier-free which is true in every Hilbertian fields but false in a infinite field K. Consider the function field K(t) which is a Hilbertian field. Note that K is existentially closed in K(t) by Theorem~\ref{thm3}. Takes $m,n=1$ in Proposition~\ref{prop11} and we get $K(t) \models \exists x \forall y \neg \varphi(x,y)$ which contradicts to our assumption. This proves our claim.
\end{proof}
In the following paragraphs, we prove the decidability of the $\forall^1 \exists^1$ theory of PAC fields. The proof is similar to the proof of the decidability of the $\forall^1 \exists^1$ theory of Hilbertian fields. First, we demonstrate the case in characteristic 0.
\begin{thm}
\label{thm20}
The $\forall^1 \exists^1$ theory of PAC fields of characteristic 0 is decidable.
\end{thm}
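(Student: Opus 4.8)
\emph{Proof proposal.} The plan is to run the same argument used for Theorem~\ref{thm17}, with the role played there by Lemma~\ref{lem3} (preservation of $\exists^n\forall^1$ sentences over Hilbertian fields) taken over by the corresponding preservation property of PAC fields. Concretely, I would show that the $\forall^1\exists^1$ theory of PAC fields of characteristic $0$ coincides with the $\forall^1\exists^1$ theory of Hilbertian PAC fields of characteristic $0$, and then invoke Corollary~\ref{cor4} to conclude decidability.

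One inclusion is trivial: every Hilbertian PAC field of characteristic $0$ is in particular a PAC field of characteristic $0$, so any $\forall^1\exists^1$ sentence true in all PAC fields of characteristic $0$ is true in all Hilbertian PAC fields of characteristic $0$. For the reverse inclusion I would argue by contradiction. Suppose $\forall x\exists y\,\varphi(x,y)$, with $\varphi$ quantifier-free, holds in every Hilbertian PAC field of characteristic $0$ but fails in some PAC field $K$ of characteristic $0$, so that $K\models\exists x\forall y\,\neg\varphi(x,y)$. By Proposition 13.4.6 in \cite{FJ} applied to $K$, there is a regular extension $F$ of $K$ which is PAC and Hilbertian, and $F$ again has characteristic $0$. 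The key point is that a PAC field is existentially closed in each of its regular extensions (a standard fact about PAC fields, see \cite{FJ}), so $K$ is existentially closed in $F$. Taking $m=n=1$ in Proposition~\ref{prop11} then gives $F\models\exists x\forall y\,\neg\varphi(x,y)$, contradicting the hypothesis that $\forall x\exists y\,\varphi(x,y)$ is true in the Hilbertian PAC field $F$ of characteristic $0$. Hence no such sentence exists, the two theories are equal, and Corollary~\ref{cor4} finishes the proof.

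The main obstacle is isolating the correct preservation statement: here it is the PAC hypothesis on $K$, rather than a Hilbertianity hypothesis, that must be exploited, and it is used precisely through the fact that $K$ stays existentially closed in the regular extension $F$ produced by Proposition 13.4.6 of \cite{FJ}. This is in contrast with the Hilbertian case (Theorem~\ref{thm18}), where one instead used that $K$ is existentially closed in a purely transcendental extension $K(t)$ (Theorem~\ref{thm3}) and that $K(t)$ is Hilbertian. Once this input is in place, everything else is immediate from Proposition~\ref{prop11} and the already-established decidability of the elementary theory of Hilbertian PAC fields of characteristic $0$; no reduction to disjunctive normal form is even needed, since Proposition~\ref{prop11} transfers $\exists^1\forall^1$ sentences directly.
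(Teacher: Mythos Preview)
Your argument is correct and matches the paper's proof essentially line for line: both show that the $\forall^1\exists^1$ theory of PAC fields of characteristic $0$ coincides with that of Hilbertian PAC fields of characteristic $0$ by passing from a PAC field to a Hilbertian PAC regular extension via Proposition~13.4.6 of \cite{FJ}, invoking existential closedness of PAC fields in regular extensions (Proposition~11.3.5 of \cite{FJ}), applying Proposition~\ref{prop11} with $m=n=1$, and then citing Corollary~\ref{cor4}. The only difference is cosmetic (your naming of the fields and the explicit citation for the existential-closedness fact).
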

\begin{proof}
From Corollary~\ref{cor4}, the $\forall^1 \exists^1$ theory of Hilbertian PAC fields of characteristic 0 is decidable. If we shows that the $\forall^1 \exists^1$ theory of PAC fields of characteristic 0 and the $\forall^1 \exists^1$ theory of Hilbertian PAC fields of characteristic 0 are identical, then we get the desired result.

Since every Hilbertian PAC field of characteristic 0 is a PAC field of characteristic 0, the $\forall^1 \exists^1$ theory of PAC fields of characteristic 0 is contained in the $\forall^1 \exists^1 $ theory of Hilbertian PAC fields of characteristic 0. On the other hand, let's assume that there exists an  $\forall^1 \exists^1$ sentence $\forall x \exists y \varphi(x,y)$ with $\varphi(x,y)$ quantifier-free which is true in every Hilbertian PAC field of characteristic 0 but false in a PAC field P of characteristic 0. From Theorem 13.4.6 in \cite{FJ} we can find a Hilbertian PAC field K of characteristic 0 which is regular extension of P. But Proposition 11.3.5 in \cite{FJ} shows that P is existentially closed in K. Take $m,n=1$ in Proposition~\ref{prop11} and conclude that $K \models \exists x \forall y \neg \varphi(x,y)$ which contradicts to our assumption.
\end{proof}
\begin{cor}
The $\forall^m \exists^n$ theory of PAC fields of characteristic 0 is decidable for arbitrary integers m,n.
\end{cor}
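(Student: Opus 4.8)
The plan is to imitate the proof of Theorem~\ref{thm20} verbatim, replacing the single quantifiers there by a block of $m$ universal and a block of $n$ existential quantifiers. Two ingredients are needed. First, by Corollary~\ref{cor4} the \emph{entire} elementary theory of Hilbertian PAC fields of characteristic $0$ is decidable, so in particular its $\forall^m\exists^n$ fragment is decidable for every $m,n$. Second, one must show that the $\forall^m\exists^n$ theory of PAC fields of characteristic $0$ coincides with the $\forall^m\exists^n$ theory of Hilbertian PAC fields of characteristic $0$; granting this, decidability of the latter transfers to the former and we are done.

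For the coincidence of the two theories, one inclusion is immediate, since every Hilbertian PAC field of characteristic $0$ is a PAC field of characteristic $0$. For the reverse inclusion I would argue by contradiction. Suppose $\forall x_1\cdots\forall x_m\,\exists y_1\cdots\exists y_n\,\varphi(\bar x,\bar y)$, with $\varphi$ quantifier-free, holds in every Hilbertian PAC field of characteristic $0$ but fails in some PAC field $P$ of characteristic $0$. Then $P\models\exists x_1\cdots\exists x_m\,\forall y_1\cdots\forall y_n\,\neg\varphi(\bar x,\bar y)$, which is an $\exists^m\forall^n$ sentence. By Theorem~13.4.6 in~\cite{FJ} there is a Hilbertian PAC field $K$ of characteristic $0$ which is a regular extension of $P$, and by Proposition~11.3.5 in~\cite{FJ} the field $P$ is existentially closed in $K$. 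Applying Proposition~\ref{prop11} with $\mathcal{A}=P$ and $\mathcal{B}=K$ (for the exponents $m,n$), the sentence $\exists x_1\cdots\exists x_m\,\forall y_1\cdots\forall y_n\,\neg\varphi(\bar x,\bar y)$ passes to $K$, contradicting the fact that $K$ is a Hilbertian PAC field of characteristic $0$, in which $\forall\bar x\,\exists\bar y\,\varphi(\bar x,\bar y)$ is assumed true.

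I expect no genuine obstacle beyond the $m=n=1$ case already treated in Theorem~\ref{thm20}: Proposition~\ref{prop11} is stated for all $m,n\in\mathbb{N}$, and the regular-extension statement and the existential-closedness statement from~\cite{FJ} are insensitive to the number of variables. The only point requiring care is the routine bookkeeping that the negation of a $\forall^m\exists^n$ sentence is logically equivalent to an $\exists^m\forall^n$ sentence (so that Proposition~\ref{prop11} is applicable), together with the degenerate cases $m=0$ or $n=0$, which reduce respectively to preservation of existential sentences under existentially closed extensions and to preservation of universal sentences under subfields, both already covered. Hence the corollary follows by exactly the pattern of Theorem~\ref{thm17} and Theorem~\ref{thm20}.
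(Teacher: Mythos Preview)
Your proposal is correct and follows exactly the approach of the paper: invoke Corollary~\ref{cor4} for decidability of the full theory of Hilbertian PAC fields of characteristic $0$, then show the $\forall^m\exists^n$ theories of PAC and Hilbertian PAC fields of characteristic $0$ coincide by passing to a regular Hilbertian PAC extension (Theorem~13.4.6 in \cite{FJ}), using existential closedness of PAC fields in regular extensions (Proposition~11.3.5 in \cite{FJ}), and applying Proposition~\ref{prop11}. The paper's proof merely says ``modify the proof in Theorem~\ref{thm20}''; you have written out that modification in full, with the added care of noting the degenerate cases $m=0$ or $n=0$.
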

\begin{proof}
From Corollary~\ref{cor4}, the $\forall^m \exists^n$ theory of Hilbertian PAC fields of characteristic 0 is decidable for arbitrary integers m,n. Notice that Proposition~\ref{prop11} shows the preservation of $\exists^n \forall^m$ sentences for all $m,n \in \mathbb{N}$ over any $\mathcal{L}$-structures under existential closedness. Then we can modify the proof in Theorem~\ref{thm20} to show that the $\forall^m \exists^n$ theory of PAC fields of characteristic 0 and the $\forall^m \exists^n$ theory of Hilbertian PAC fields of characteristic 0 are identical. This proves the desired result.
\end{proof}
Notice that the Corollary~\ref{cor4} does not show the decidability of Hilbertian PAC field. Therefore, we need to seek other $\forall^1 \exists^1$ theory of some structures to prove the decidability of $\forall^1 \exists^1$ theory of PAC fields. From Theorem~\ref{thm17} and Proposition 11.3.5 in \cite{FJ}, we discover that the $\forall^1 \exists^1$ theory of Hilbertian fields of characteristic 0 and the $\forall^1 \exists^1$ theory of PAC fields of characteristic 0 are identical. We would generalize this result without the condition of characteristic using similar approach as before. 
\begin{thm}
\label{thm22}
The $\forall^1 \exists^1$ theory of PAC fields is decidable.
\end{thm}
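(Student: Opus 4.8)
The plan is to show that the $\forall^1 \exists^1$ theory of PAC fields coincides with the $\forall^1 \exists^1$ theory of Hilbertian fields; once this is done, decidability is immediate from Theorem~\ref{thm18}. This is the characteristic-free analogue of the coincidence recorded in the remark preceding this theorem, but the argument must be routed through Theorem~\ref{thm18} rather than through Corollary~\ref{cor4}, since the latter only handles characteristic $0$ (equivalently, we do not have decidability of Hilbertian PAC $=$ $\omega$-free PAC fields in positive characteristic).

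For the inclusion of the $\forall^1 \exists^1$ theory of Hilbertian fields into that of PAC fields, I would invoke the fact, established inside the proof of Theorem~\ref{thm18}, that the $\forall^1 \exists^1$ theory of Hilbertian fields equals the $\forall^1 \exists^1$ theory of infinite fields. Since every PAC field is infinite (see \cite{FJ}), every $\forall^1 \exists^1$ sentence true in all Hilbertian fields is true in all PAC fields. Alternatively one can argue directly: for a PAC field $P$ the rational function field $P(t)$ is Hilbertian by Theorem 13.4.2 in \cite{FJ} and $P$ is existentially closed in the purely transcendental extension $P(t)$ by Theorem~\ref{thm3}, so by Proposition~\ref{prop11} (with $m=n=1$) any $\forall^1 \exists^1$ sentence holding in $P(t)$ holds in $P$.

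For the reverse inclusion, suppose some $\forall^1 \exists^1$ sentence $\forall x \exists y\, \varphi(x,y)$ with $\varphi$ quantifier-free is true in every PAC field but false in a Hilbertian field $K$; then the $\exists^1 \forall^1$ sentence $\exists x \forall y\, \neg\varphi(x,y)$ is true in $K$. By Proposition 13.4.6 in \cite{FJ}, $K$ has a regular extension $F$ that is PAC (in fact Hilbertian and PAC). Applying Lemma~\ref{lem3} with $n=1$ to the Hilbertian field $K$ and its regular extension $F$ gives $F \models \exists x \forall y\, \neg\varphi(x,y)$, hence $F \not\models \forall x \exists y\, \varphi(x,y)$, contradicting that the sentence holds in all PAC fields. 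Therefore the two theories are equal and the result follows. The one point that needs care is the asymmetry between the two inclusions: the forward direction is essentially free from ``PAC $\Rightarrow$ infinite'', while the reverse direction genuinely uses the Hilbertianity of $K$ through the preservation of $\exists^n \forall^1$ sentences under regular extensions (Lemma~\ref{lem3}) --- using existential closedness here would fail, since a Hilbertian field need not be existentially closed in its regular extensions. Everything else is bookkeeping, as the required preservation statements and the existence of a PAC regular extension are already available.
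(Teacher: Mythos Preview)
Your proposal is correct and follows essentially the same route as the paper: both reduce to Theorem~\ref{thm18} by showing the $\forall^1\exists^1$ theories of PAC fields and of Hilbertian fields coincide, using $P(t)$ plus Proposition~\ref{prop11} for one inclusion and Lemma~\ref{lem3} applied to a PAC regular extension of a given Hilbertian field for the other. The only cosmetic difference is that the paper obtains the PAC regular extension from Wheeler~\cite{WW} (as a totally transcendental extension) rather than from Proposition~13.4.6 of~\cite{FJ}; your alternative first inclusion via ``PAC $\Rightarrow$ infinite'' is also valid but not needed.
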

\begin{proof}
We claim the the $\forall^1 \exists^1$ theory of PAC fields and the $\forall^1 \exists^1$ theory of Hilbertian fields are identical. Then Theorem~\ref{thm18} implies the theorem.

Suppose that there exists an $\forall^1 \exists^1$ sentence $\forall x \exists y \varphi(x,y)$ with $\varphi(x,y)$ quantifier-free which is true in every PAC fields but false in a Hilbertian field H. By the result in \cite{WW}, we can find a PAC field P which is totally transcendental extension of H. Using Lemma~\ref{lem3}, we have $P \models \exists x \forall y \neg \varphi(x,y)$ which contradicts to our assumption. Conversely, assume that there exists an $\forall^1 \exists^1$ sentence $\forall x \exists y \varphi(x,y)$ with $\varphi(x,y)$ quantifier-free which is true in every Hilbertian fields but false in a PAC field P. Consider the function field P(t) which is a Hilbertian field. According to Theorem~\ref{thm3}, P is existentially closed in P(t). Then take $m,n=1$ in Proposition~\ref{prop11} $ P(t) \models \exists x \forall y \neg \varphi(x,y)$ which contradicts to the assumption. Therefore, we have proved our claim.
\end{proof}
\section{Conclusion}
In this paper, we have proved the following results:
\begin{enumerate}
\item The $\exists^1 \forall^1$ theory of Hilbertian fields of characteristic 0 and perfect Hilbertian fields are decidable,
\item The $\forall^1 \exists^1$ theory of Hilbertian fields of characteristic 0 and Hilberitan fields are all decidable,
\item The $\forall^1 \exists^1$ theory of PAC fields of characteristic 0 and PAC field are decidable.
\end{enumerate}
From~\cite{Tung4}, we know that there are still no effective methods to solve the decidability of $\exists^1 \forall^1$ theory of different fields. Also, since Hilbert's tenth problem over $\mathbb{Q}$ and number fields are still open, we propose the following problems about the decidability of theories in different algebriac structures for the future developments.

\begin{enumerate}
\item $\forall^n \exists$ theory of number fields for arbitrary n,
\item $\exists^1 \forall^1$ theory of PAC fields,
\item $\exists^1 \forall^1$ theory of number fields,
\item $\exists^1 \forall^1$ theory of fields of characteristic 0,
\item $\exists^1 \forall^1$ theory of fields. 
\end{enumerate}

\end{document}